\theoremstyle{plain}
\newtheorem{thm}{Theorem}[section]
\newtheorem{prop}[thm]{Proposition}
\newtheorem{lem}[thm]{Lemma}
\newtheorem{cor}[thm]{Corollary}
\theoremstyle{definition}
\newtheorem{dfn}[thm]{Definition}
\newtheorem{exmp}[thm]{Example}
\newtheorem{rem}[thm]{Remark}
\newtheorem{dfns-rems}[thm]{Definitions and Remarks}
\newtheorem{notas-rems}[thm]{Notations and Remarks}
\newtheorem{exmps-rems}[thm]{Examples and Remarks}
\DeclareMathOperator{\min-match}{min-match}
\DeclareMathOperator{\ind-match}{ind-match}
\begin{document}


\title[On the regularity of edge ideal of graphs]{On the regularity of edge ideal of graphs}


\author[S. A. Seyed Fakhari]{S. A. Seyed Fakhari}

\address{S. A. Seyed Fakhari, School of Mathematics, Statistics and Computer Science,
College of Science, University of Tehran, Tehran, Iran.}

\email{aminfakhari@ut.ac.ir}

\urladdr{http://math.ipm.ac.ir/$\sim$fakhari/}

\author[S. Yassemi]{S. Yassemi}

\address{S. Yassemi, School of Mathematics, Statistics and Computer Science,
College of Science, University of Tehran, Tehran, Iran.}

\email{yassemi@ut.ac.ir}

\urladdr{http://math.ipm.ac.ir/$\sim$yassemi/}


\begin{abstract}
Let $G$ be a graph with $n$ vertices, $S=\mathbb{K}[x_1,\dots,x_n]$ be the
polynomial ring in $n$ variables over a field $\mathbb{K}$ and $I(G)$ denote the edge ideal of $G$. For every collection $\mathcal{H}$ of connected graphs with $K_2\in \mathcal{H}$, we introduce the notions of $\ind-match_{\mathcal{H}}(G)$ and $\min-match_{\mathcal{H}}(G)$. It will be proved that the inequalities $\ind-match_{\{K_2, C_5\}}(G)\leq{\rm reg}(S/I(G))\leq\min-match_{\{K_2, C_5\}}(G)$ are true. Moreover, we show that if $G$ is a Cohen--Macaulay graph with girth at least five, then ${\rm reg}(S/I(G))=\ind-match_{\{K_2, C_5\}}(G)$. Furthermore, we prove that if $G$ is a paw--free and doubly Cohen--Macaulay graph, then ${\rm reg}(S/I(G))=\ind-match_{\{K_2, C_5\}}(G)$ if and only if every connected component of $G$ is either a complete graph or a $5$-cycle graph. Among other results, we show that for every doubly Cohen--Macaulay simplicial complex, the equality ${\rm reg}(\mathbb{K}[\Delta])={\rm dim}(\mathbb{K}[\Delta])$ holds.
\end{abstract}


\subjclass[2000]{Primary: 13D02, 05E99; Secondary: 13F55}


\keywords{Edge ideal, Castelnuovo--Mumford regularity, Girth, Matching, Paw--free graph}


\thanks{}


\maketitle


\section{Introduction} \label{sec1}

Let $\mathbb{K}$ be a field and $S = \mathbb{K}[x_1,\ldots,x_n]$  be the
polynomial ring in $n$ variables over $\mathbb{K}$. Suppose that $M$ is a graded $S$-module with minimal free resolution
$$0  \longrightarrow \cdots \longrightarrow  \bigoplus_{j}S(-j)^{\beta_{1,j}(M)} \longrightarrow \bigoplus_{j}S(-j)^{\beta_{0,j}(M)}   \longrightarrow  M \longrightarrow 0.$$
The Castelnuovo--Mumford regularity (or simply, regularity) of $M$, denote by ${\rm reg}(M)$, is defined as follows:
$${\rm reg}(M)=\max\{j-i|\ \beta_{i,j}(M)\neq0\}.$$
The regularity of $M$ is an important invariant in commutative algebra and algebraic geometry.

There is a natural correspondence between quadratic squarefree monomial ideals of $S$ and finite simple graphs with $n$ vertices. To every simple graph $G$ with vertex set $V(G)=\{v_1, \ldots, v_n\}$ and edge set $E(G)$, we associate its {\it edge ideal} $I=I(G)$ defined by
$$I(G)=\big(x_ix_j: v_iv_j\in E(G)\big)\subseteq S.$$The quotient $S/I(G)$ is the {\it edge ring} of $G$. Computing and finding bounds for the regularity of edge ideals have been studied by a number of researchers (see for example \cite{bc}, \cite{dhs}, \cite{ha}, \cite{hv}, \cite{hhkt}, \cite{k}, \cite{khm} and \cite{wo}).

Let $G$ be a graph. A subset $M\subseteq E(G)$ is a {\it matching} if $e\cap e'=\emptyset$, for every pair of edges $e, e'\in M$. The cardinality of the largest matching of $G$ is called the {\it matching number} of $G$ and is denoted by ${\rm match}(G)$. The minimum cardinality of the maximal matchings of $G$ is the {\it minimum matching number} of $G$ and is denoted by $\min-match(G)$. A matching $M$ of $G$ is an {\it induced matching} of $G$ if for every pair of edges $e, e'\in M$, there is no edge $f\in E(G)\setminus M$ with $f\subset e\cup e'$. The cardinality of the largest induced matching of $G$ is called the {\it induced  matching number} of $G$ and is denoted by $\ind-match(G)$.

By \cite{k} and \cite{wo}, we know that for every graph $G$,
\[
\begin{array}{rl}
\ind-match(G)\leq {\rm reg}(S/I(G))\leq \min-match(G).
\end{array} \tag{$\ast$} \label{ast}
\]
The first goal of this paper is to improve  the above inequalities. Indeed, Let $\mathcal{H}$ be a collection of connected graphs with $K_2\in \mathcal{H}$. In Definition \ref{newdef}, for every graph $G$, we introduce the notions of $\ind-match_{\mathcal{H}}(G)$ and $\min-match_{\mathcal{H}}(G)$. It turns out that $\ind-match_{\mathcal{H}}(G)$ is an upper bound for the induced matching number of $G$ and $\min-match_{\mathcal{H}}(G)$ is a lower bound for minimum matching number of $G$. Our first result, Theorem \ref{main1} is less or more known. However, we formulate it and provide its proof for the sake of completeness. It states that if every graph $H\in \mathcal{H}$ has the property that ${\rm reg}(S/I(H))={\rm match}(H)$, then $\ind-match_{\mathcal{H}}(G)\leq {\rm reg}(S/I(G))$, for every graph $G$. Next, in Theorem \ref{main2} we provide an upper bound for the regularity of $S/I(G)$. It will be proven that if every member of $\mathcal{H}$ other than $K_2$ is a cycle graph, then for every graph $G$ we have ${\rm reg}(S/I(G))\leq \min-match_{\mathcal{H}}(G)$. Of particular interest is the case $\mathcal{H}=\{K_2, C_5\}$. As a consequence of Theorems \ref{main1} and \ref{main2}, we conclude that$$\ind-match_{\{K_2, C_5\}}(G)\leq{\rm reg}(S/I(G))\leq\min-match_{\{K_2, C_5\}}(G),$$for every graph $G$ (see Corollary \ref{k5}). Notice that the above inequalities are improvement of inequalities (\ref{ast}). We then concentrate on the left hand side inequality. It will be shown in Theorem \ref{girth5} that for every Cohen--Macaulay graph with girth at least five the equality ${\rm reg}(S/I(G))=\ind-match_{\{K_2, C_5\}}(G)$ holds. Our proof of Theorem \ref{girth5} is based on the characterization of Cohen--Macaulay graphs with girth at least five, given in \cite{hmt}, which states that every such graph belongs to the class $\mathcal{PC}$, i.e., its vertex set admits a particular kind of interesting partition (see Definition \ref{defpc} for more details).

In Section \ref{sec5}, we strengthen the inequality $\ind-match_{\{K_2, C_5\}}(G)\leq{\rm reg}(S/I(G))$ in a special case. More precisely, we consider the paw--free graphs (see Definition \ref{defpaw}) and prove in Theorem \ref{2cmpaw} that for every connected paw--free doubly Cohen--Macaulay graph $G$ other than $C_5$ and complete graphs, we have$$\ind-match_{\{K_2, C_5\}}(G)+1\leq{\rm reg}(S/I(G)).$$ To prove Theorem \ref{2cmpaw}, we first show in Theorem \ref{2cm} that the regularity of the Stanley--Reisner ring a doubly Cohen--Macaulay simplicial complex is equal to its dimension. This result looks interesting by itself and we believe that it can be useful in the study of doubly Cohen--Macaulay simplicial complexes. As a consequence of Theorem \ref{2cmpaw}, we show that a paw--free graph $G$ with ${\rm reg}(S/I(G))=\ind-match_{\{K_2, C_5\}}(G)$ is Gorenstein if and only if its connected components are isomorphic to $K_1, K_2$ or $C_5$ (see Corollary \ref{gorpaw}). This result can be seen as a generalization of the following know result: $K_1,K_2$ and $C_5$ are the only connected Gorenstein graphs with girth at least five.


\section{Preliminaries} \label{sec2}

In this section, we provide the definitions and basic facts which will be used in the next sections.

Let $G$ be a simple graph with vertex set $V(G)=\big\{v_1, \ldots,
v_n\big\}$ and edge set $E(G)$. For a vertex $v_i$, the {\it neighbor set} of $v_i$ is $N_G(v_i)=\{v_j\mid v_iv_j\in E(G)\}$ and we set $N_G[v_i]=N_G(v_i)\cup \{v_i\}$ and call it the {\it closed neighborhood} of $v_i$. The cardinality of $N_G(v_i)$ is called the {\it degree} of $v_i$.  For every subset $U\subset V(G)$, the graph $G\setminus U$ has vertex set $V(G\setminus U)=V(G)\setminus U$ and edge set $E(G\setminus U)=\{e\in E(G)\mid e\cap U=\emptyset\}$. A subgraph $H$ of $G$ is called {\it induced} provided that two vertices of $H$ are adjacent if and only if they are adjacent in $G$. A cycle graph with $n$ vertices is called an {\it $n$-cycle} graph an is denoted by $C_n$. The {\it girth} of $G$, denoted by ${\rm girth}(G)$ is the length of the shortest cycle in $G$. A subset $A$ of $V(G)$ is called an {\it independent subset} of $G$ if there are no edges among the vertices of $A$. The graph $G$ is {\it unmixed} if all its maximal independent sets have the same cardinality. The cardinality of the largest independent subset of vertices of $G$ is called the {\it independence number} of $G$ and is denoted by $\alpha(G)$. A subset $C$ of $V(G)$ is a {\it vertex cover} of the graph $G$ if every edge of $G$ is incident to at least one vertex of $C$. Adding a {\it whisker} to $G$ at a vertex $v_i$ means adding a new vertex $u$ and the edge $uv_i$ to $G$. The graph which is obtained from $G$ by adding a whisker to all of its vertices is denoted by $W(G)$.

A {\it simplicial complex} $\Delta$ on the set of vertices $V(\Delta)=\{v_1,
\ldots,v_n\}$ is a collection of subsets of $\Delta$ which is closed under
taking subsets; that is, if $F \in \Delta$ and $F'\subseteq F$, then also
$F'\in\Delta$. Every element $F\in\Delta$ is called a {\it face} of
$\Delta$, the {\it dimension} of a face $F$ is defined to be $|F|-1$. The {\it dimension} of
$\Delta$ which is denoted by $\dim\Delta$, is defined to be $d-1$, where $d
=\max\{|F|\mid F\in\Delta\}$. The {\it link}
of $\Delta$ with respect to a vertex $v \in V(\Delta)$ is the simplicial complex$${\rm lk_{\Delta}}v=\big\{G
\subseteq V(\Delta)\setminus \{v\}\mid G\cup \{v\}\in \Delta\big\}$$and the {\it deletion} of
$v$ is the simplicial complex$${\rm del_{\Delta}}v=\{G \in \Delta\mid v\notin G\}.$$Let $\Delta_1$ and $\Delta_2$ be two simplicial complexes with disjoint set of vertices. The {\it join} of $\Delta_1$ and $\Delta_2$ is defined to be$$\Delta_1\ast\Delta_2=\{F_1\cup F_2\mid F_1\in \Delta_1, F_2\in \Delta_2\}.$$

Let $\Delta$ be a simplicial complex with vertex set $V(\Delta)=\{v_1,
\ldots,v_n\}$. For every subset $F\subseteq V(\Delta)$, we set ${\it x}_F=\prod_{v_i\in F}x_i$. The {\it
Stanley--Reisner ideal of $\Delta$ over $\mathbb{K}$} is the ideal $I_{
\Delta}$ of $S$ which is generated by those squarefree monomials $x_F$ with
$F\notin\Delta$. The {\it Stanley--Reisner ring of $\Delta$ over $\mathbb{K}$} is defined to be $\mathbb{K}[\Delta]=S/I_{\Delta}$. It is not difficult to check that $\dim\mathbb{K}[\Delta]=\dim\Delta+1$. A simplicial complex is said to be {\it Cohen--Macaulay} (resp. {\it Gorenstein}) if its Stanley--Reisner ring is Cohen--Macaulay (resp. Gorenstein). Also, a simplicial complex $\Delta$ is {\it doubly Cohen--Macaulay} if $\Delta$ is Cohen--Macaulay and for every vertex $v$ of $\Delta$, the simplicial complex ${\rm del_{\Delta}}v$ is Cohen--Macaulay of the same dimension as $\Delta$.

Let $G$ be a graph. The {\it independence simplicial complex} of $G$ is defined by
$$\Delta(G)=\{A\subseteq V(G)\mid A \,\, \mbox{is an independent set in}\,\,
G\}.$$Note that $\dim\Delta(G)=\alpha(G)-1$ and the Stanley--Reisner ideal of $\Delta(G)$ is the same as the edge ideal of $G$. One can easily check that $\Delta(G)$ is the join of the independence simplicial complex of connected components of $G$. A graph is said to be {\it Cohen--Macaulay}, {\it Gorenstein} or {\it doubly Cohen--Macaulay} if its independence simplicial complex satisfies the same property.

Let $\Delta$ be a simplicial complex with $\dim\Delta=d-1$. The number of faces of
$\Delta$ of dimension $i$ is denoted by $f_i$. The sequence $f(\Delta)=(f_0,f_1,\ldots,
f_{d-1})$ is called the {\it $f$-vector} of $\Delta$. Letting $f_{-1}=1$, the {\it $h$-vector}
$h(\Delta)=(h_0,h_1,\ldots,h_d)$ of $\Delta$ is defined by the formula $$\sum_{i=0}^
{d}f_{i-1}(t-1)^{d-i}=\sum_{i=0}^d h_it^ {d-i}.$$

We close this section by reminding the concept of the Hilbert series. Assume that $M=\oplus_{i\in \mathbb{Z}}M_i$ is a graded $S$-module. The {\it Hilbert series} of $M$ is defined to be$${\rm Hilb}_M(z)=\sum_{i\in \mathbb{Z}}({\rm dim}_{\mathbb{K}}M_i)z^i.$$It is well-known that for a $(d-1)$-dimensional simplicial complex $\Delta$, with $h$-vector $h(\Delta)=(h_0,
h_1,\ldots,h_d)$, we have$${\rm Hilb}_{\mathbb{K}[\Delta]}(z)=\frac{h_dz^d+h_{d-1}z^{d-1}+\ldots +h_1z+h_0}{(1-z)^d}.$$


\section{Bounds for the regularity of edge ideals} \label{sec3}

In this section, we provide a lower bound and an upper bound for the regularity of the edge ring of graphs. We first need the following definition which has a central role in our results.

\begin{dfn} \label{newdef}
Let $G$ be a graph with at least one edge and let $\mathcal{H}$ be a collection of connected graphs with $K_2\in \mathcal{H}$. We say that a subgraph $H$ of $G$, is an {\it $\mathcal{H}$--subgraph} if every connected component of $H$ belongs to $\mathcal{H}$. If moreover $H$ is an induced subgraph of $G$, then we say that it is an {\it induced $\mathcal{H}$--subgraph} of $G$. Since $K_2\in \mathcal{H}$, every graph with at least one edge has an induced $\mathcal{H}$--subgraph. An $\mathcal{H}$--subgraph $H$ of $G$ is called {\it maximal} if $G\setminus V(H)$ has no $\mathcal{H}$--subgraph. We set$$\ind-match_{\mathcal{H}}(G):={\rm max} \big\{{\rm match}(H)\mid H \ {\rm is \ an \ induced} \ \mathcal{H}\textendash {\rm subgraph \ of} \ G \big\},$$and$$\min-match_{\mathcal{H}}(G):={\rm min} \big\{{\rm match}(H)\mid H \ {\rm is \ a \ maximal} \ \mathcal{H}\textendash {\rm subgraph \ of} \ G \big\},$$and call them the {\it induced $\mathcal{H}$--matching number} and the {\it minimum $\mathcal{H}$--matching number} of $G$, respectively. We set $\ind-match_{\mathcal{H}}(G)=\min-match_{\mathcal{H}}(G)=0$, when $G$ has no edge.
\end{dfn}

\begin{rem}
It is clear that for $\mathcal{H}=\{K_2\}$, every (induced) $\mathcal{H}$--subgraph of $G$ is indeed a (induced) matching of $G$. Thus, $$\ind-match_{\{K_2\}}(G)=\ind-match(G) \ \ \ {\rm and} \ \ \ \min-match_{\{K_2\}}(G)=\min-match(G).$$We could also define the notion of ${\rm match}_{\mathcal{H}}(G)$ to be the maximum matching number of $\mathcal{H}$--subgraphs of $G$. But it follows from $K_2\in \mathcal{H}$ that ${\rm match}_{\mathcal{H}}(G)$ would be equal to the usual matching number of $G$.
\end{rem}

Let $\mathcal{H}$ be a collection of connected graphs with $K_2\in \mathcal{H}$. It is obvious that for every graph $G$ we have $$\ind-match(G)\leq \ind-match_{\mathcal{H}}(G) \ \ \ \ {\rm and} \ \ \ \ \min-match_{\mathcal{H}}(G)\leq \min-match(G).$$The following examples show that the above inequalities can be strict.

\begin{exmp}
Let $G$ be a connected graph with $\ind-match(G)< {\rm match}(G)$ (for example consider a cycle of length at least $4$) and set $\mathcal{H}=\{K_2, G\}$. Then clearly we have $\ind-match_{\mathcal{H}}(G)={\rm match}(G)$ and thus, $\ind-match(G)< \ind-match_{\mathcal{H}}(G)$.
\end{exmp}

\begin{exmp}
Let $k\geq 1$ be a positive integer and set $\mathcal{H}=\{K_2, C_{2k+1}\}$. Consider the whiskered graph $G=W(C_{2k+1})$. As $C_{2k+1}$ is an induced $\mathcal{H}$--subgraph of $G$, we conclude that $k=\min-match_{\mathcal{H}}(G)< \min-match(G)=k+1.$
\end{exmp}

In the following theorem, we determine a lower bound for the regularity of the edge ring of graphs. It in fact improves the inequality $\ind-match(G)\leq {\rm reg}(S/I(G))$. We believe that this result is less or more known. However, we formulate it and provide its proof for the sake of completeness.

\begin{thm} \label{main1}
Let $\mathcal{H}$ be a subset of$$\mathcal{G}=\{H \mid H \ {\rm is \ a connected \ graph \ with} \ {\rm reg}(S/I(H))={\rm match}(H)\},$$with $K_2\in \mathcal{H}$. Then for every graph $G$ we have$$\ind-match_{\mathcal{H}}(G)\leq {\rm reg}(S/I(G)).$$In particular $\ind-match_{\{K_2, C_5\}}(G)\leq {\rm reg}(S/I(G)).$
\end{thm}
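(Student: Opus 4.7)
The plan is to reduce the inequality to two well-known structural facts about the regularity of edge ideals: (i) passing to an induced subgraph does not increase the regularity, and (ii) the regularity of the edge ideal of a disjoint union of graphs (on disjoint variable sets) is the sum of the regularities of the pieces. The hypothesis on $\mathcal{H}$ is precisely what is needed to cash in (ii).

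First, I would choose an induced $\mathcal{H}$-subgraph $H$ of $G$ achieving the maximum, so that ${\rm match}(H) = \ind-match_{\mathcal{H}}(G)$. By the definition of an $\mathcal{H}$-subgraph, every connected component $H_1,\ldots,H_k$ of $H$ belongs to $\mathcal{H}$. Since the $H_i$ live on pairwise disjoint vertex sets, the matching number is additive:
\[
{\rm match}(H) = \sum_{i=1}^{k}{\rm match}(H_i).
\]
On the algebraic side, the same disjointness gives $I(H) = I(H_1)+\cdots+I(H_k)$ as an ideal in a tensor product of polynomial rings in disjoint variables, and the regularity of a tensor product of modules over a field is the sum of the regularities:
\[
{\rm reg}\bigl(S_H/I(H)\bigr) = \sum_{i=1}^{k}{\rm reg}\bigl(S_{H_i}/I(H_i)\bigr).
\]
(The regularity does not depend on adjoining extra variables, so it is harmless to compute in the smaller ring $S_H$ generated by the variables of $H$.)

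Now I invoke the hypothesis $\mathcal{H}\subseteq \mathcal{G}$: each $H_i$ satisfies ${\rm reg}(S_{H_i}/I(H_i))={\rm match}(H_i)$. Combining the two displays above yields
\[
{\rm reg}\bigl(S_H/I(H)\bigr) = \sum_{i=1}^{k}{\rm match}(H_i) = {\rm match}(H) = \ind-match_{\mathcal{H}}(G).
\]
Finally, since $H$ is an induced subgraph of $G$, the standard fact that ${\rm reg}(S_H/I(H)) \leq {\rm reg}(S/I(G))$ (which follows, for example, from the fact that $S_H/I(H)$ is obtained from $S/I(G)$ by quotienting out by a regular sequence of variables) gives the desired inequality.

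For the particular case $\mathcal{H}=\{K_2,C_5\}$, it remains to verify $K_2,C_5\in \mathcal{G}$. For $K_2$ one has ${\rm reg}(S/I(K_2))=1={\rm match}(K_2)$ directly from the Koszul resolution, and for $C_5$ the equality ${\rm reg}(S/I(C_5))=2={\rm match}(C_5)$ is a well-documented computation (e.g.\ it follows from Jacques's formulas for the Betti numbers of cycles, or can be checked by hand using Hochster's formula on the independence complex of $C_5$). There is no real obstacle here; the only subtlety worth being careful about is the bookkeeping of the ambient polynomial ring when invoking additivity of regularity over disjoint unions, which is why the argument is often attributed to folklore.
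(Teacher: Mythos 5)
Your argument is correct and is essentially the paper's own proof: pick an induced $\mathcal{H}$-subgraph $H$ attaining the maximum, use additivity of regularity over the connected components (tensor product of minimal free resolutions) together with the hypothesis ${\rm reg}(S/I(H_i))={\rm match}(H_i)$, and then pass from $H$ to $G$ via monotonicity of regularity under restriction to induced subgraphs. The one flaw is your parenthetical justification of that last step: $S_H/I(H)$ is \emph{not} obtained from $S/I(G)$ by killing a regular sequence of variables --- a variable $x_v$ with $v\notin V(H)$ is a zerodivisor on $S/I(G)$ as soon as $v$ has a neighbour $w$ in $G$, since $x_vx_w=0$ there. The inequality ${\rm reg}(S_H/I(H))\leq {\rm reg}(S/I(G))$ for induced subgraphs is nonetheless true and standard; the correct justification is that restricting a squarefree monomial ideal to a subset of the variables can only decrease the graded Betti numbers (a consequence of Hochster's formula), which is exactly the lemma of H\`a and Woodroofe that the paper cites at this point.
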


\begin{proof}
Assume that $H$ is an induced $\mathcal{H}$--subgraph of $G$ with $\ind-match_{\mathcal{H}}(G)={\rm match}(H)$. Suppose that $H_1, \ldots, H_t\in \mathcal{H}$ are the connected components of $H$. Then$${\rm reg}(S/I(H))=\sum_{i=1}^t{\rm reg}(S/I(H_i))=\sum_{i=1}^t{\rm match}(H_i)={\rm match}(H),$$where the first equality follows from the fact that the minimal free resolution of $S/I(H)$ is the tensor product of the minimal free resolution of $S/I(H_i)$'s. Hence, we conclude from \cite[Lemma 2.5]{hw} that$$\ind-match_{\mathcal{H}}(G)={\rm match}(H)={\rm reg}(S/I(H))\leq {\rm reg}(S/I(G)).$$

The last part of the theorem is an immediate consequence of the first part.
\end{proof}

Our next goal is to provide an upper bound for the regularity of the edge ring of graphs. This is the content of Theorem \ref{main2}, whose proof is based on the following lemma.

\begin{lem} \label{cycle}
Let $k\geq 3$ be a positive integer. Assume that $G$ is a graph and $H\cong C_k$ is a subgraph of $G$. If $V(H)$ is a vertex cover of $G$, then$${\rm reg}(S/I(G))\leq \lfloor\frac{k}{2}\rfloor.$$
\end{lem}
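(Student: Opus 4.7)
The plan is to prove the bound by induction on $n = |V(G)|$. Since $V(H) \subseteq V(G)$ we have $n \geq k$, and in the base case $n = k$ one has $V(G) = V(H)$; any matching of $G$ then has at most $\lfloor n/2\rfloor = \lfloor k/2\rfloor$ edges, so the inequality (\ref{ast}) gives ${\rm reg}(S/I(G)) \leq \min-match(G) \leq {\rm match}(G) \leq \lfloor k/2\rfloor$.

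For the inductive step $n > k$, I would fix a cycle vertex $v \in V(H)$ and apply the standard short exact sequence
$$0 \to S/(I(G):x_v)(-1) \xrightarrow{\cdot x_v} S/I(G) \to S/(I(G)+(x_v)) \to 0,$$
which yields
$${\rm reg}(S/I(G)) \leq \max\bigl\{{\rm reg}(S/(I(G):x_v))+1,\ {\rm reg}(S/(I(G)+(x_v)))\bigr\}.$$
The two quotients on the right correspond, up to polynomial factors that do not affect regularity, to the edge rings of $G \setminus N_G[v]$ and $G \setminus v$, respectively. In each reduced graph the cycle $H$ is broken, but the surviving cycle vertices still cover every remaining edge: $V(H)\setminus\{v\}$ is a vertex cover of $G\setminus v$ that induces the Hamiltonian path $v_2 v_3 \cdots v_k$ on $k-1$ vertices, while $V(H)\setminus N_G(v)$ is a vertex cover of $G\setminus N_G[v]$ whose induced subgraph is a disjoint union of cycle arcs totalling at most $k-2$ vertices.

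To close the induction I would prove, in parallel, a \emph{path version} of the lemma: if $G'$ admits a subgraph $P \cong P_m$ whose vertex set is a vertex cover of $G'$, then ${\rm reg}(S/I(G')) \leq \lfloor m/2\rfloor$. The argument mirrors the cycle case; its base case $|V(G')| = m$ again follows from (\ref{ast}), and its inductive step applies the SES at an endpoint of $P$, reducing to strictly shorter paths in both branches. Granting this path lemma and applying it to each connected component of $G\setminus N_G[v]$, a direct parity check shows ${\rm reg}(S/(I(G)+(x_v))) \leq \lfloor (k-1)/2\rfloor \leq \lfloor k/2\rfloor$ and ${\rm reg}(S/(I(G):x_v)) \leq \lfloor (k-2)/2\rfloor = \lfloor k/2\rfloor - 1$. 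Combining the two branches yields ${\rm reg}(S/I(G)) \leq \lfloor k/2\rfloor$.

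The main obstacle I foresee is avoiding logical circularity between the cycle lemma and the path lemma. This is handled cleanly by running a single joint induction on $n$: a cycle-case SES step produces two path-case sub-instances with strictly smaller vertex set, and further SES steps in the path case preserve the path-cover hypothesis while strictly decreasing $n$. Under this joint induction both lemmas are established simultaneously, and the claimed bound ${\rm reg}(S/I(G)) \leq \lfloor k/2\rfloor$ follows.
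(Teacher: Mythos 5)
Your overall architecture (the short exact sequence at a cycle vertex, reducing to $G\setminus v$ and $G\setminus N_G[v]$) is the same tool the paper uses, but the auxiliary ``path version'' on which your joint induction rests is false, and this is a genuine gap. Take $G'$ to be the path on the five vertices $u',u,v,w,w'$ with edges $u'u,uv,vw,ww'$: the subpath on $\{u,v,w\}$ is a copy of $P_3$ whose vertex set is a vertex cover of $G'$, yet $\{u'u,\,ww'\}$ is an induced matching, so ${\rm reg}(S/I(G'))\geq 2>1=\lfloor 3/2\rfloor$. This failure occurs exactly in the configuration your induction produces. When $k$ is even, the branch $G\setminus v$ is covered by a path on the odd number $k-1$ of vertices; already for $k=4$, taking $G=C_4$ with pendants attached at $v_2$ and $v_4$ gives $G\setminus v_1\cong P_5$ and ${\rm reg}(S/I(G\setminus v_1))=2>\lfloor(k-1)/2\rfloor=1$, so the inequality you feed into the $\max$ is simply wrong. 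The same parity problem infects the $G\setminus N_G[v]$ branch: the surviving cycle vertices form a linear forest that may contain odd arcs and even isolated vertices (when $v$ has chords into the cycle), and for such a forest with $t$ vertices the bound $\lfloor t/2\rfloor$ fails as well --- two nonadjacent cover vertices, each with a private pendant, already force regularity $2>\lfloor 2/2\rfloor$.

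The point being missed, and the one the paper's proof turns on, is parity: a vertex cover bounds the regularity by half its size only when it carries a \emph{perfect} matching inside $G$, because then the alternating matching saturates the entire cover, is therefore maximal, and the right-hand inequality of (\ref{ast}) applies. The paper arranges this in every case and needs no induction. If $k$ is even, the cycle itself has a perfect matching on its covering vertex set, and (\ref{ast}) finishes immediately. If $k$ is odd, one application of \cite[Lemma 3.2]{dhs} at $v_1$ suffices: $G\setminus v_1$ is covered by the path $v_2,\dots,v_k$ on $k-1$ (even) vertices, while $G\setminus N_G[v_1]$ is an induced subgraph of $G\setminus\{v_1,v_2,v_k\}$, which is covered by the path $v_3,\dots,v_{k-1}$ on $k-3$ (even) vertices; in both cases the alternating matching is a maximal matching of the ambient graph, giving the bounds $(k-1)/2$ and $(k-3)/2$ directly from (\ref{ast}). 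Your induction could be repaired by restricting the auxiliary lemma to covers admitting a perfect matching, but at that point it collapses to the paper's direct two-case argument and the induction becomes unnecessary.
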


\begin{proof}
Assume that $V(H)=\{v_1, v_2, \ldots, v_k\}$ and $E(H)=\{v_1v_2, v_2v_3, \ldots, v_{k-1}v_k, v_kv_1\}$. If $k$ is even, then the assumption implies that the set of edges $\{v_1v_2, v_3v_4, \ldots, v_{k-1}v_k\}$ is a maximal matching of $G$. Therefore by the second inequality in (\ref{ast}), we conclude that$${\rm reg}(S/I(G))\leq \frac{k}{2}.$$

Thus, assume that $k$ is odd. Consider the graph $G\setminus N_G[v_1]$. It is clear that $G\setminus N_G[v_1]$ is an induced subgraph of the graph $G'=G\setminus \{v_1, v_2, v_k\}$. One can easily see that $\{v_3, \ldots, v_{k-1}\}$ is a vertex cover of $G'$. Thus, the set of edges $\{v_3v_4, v_5v_6, \ldots, v_{k-2}v_{k-1}\}$ is a maximal matching of $G'$. It follows from \cite[Lemma 3.1]{ha} and the second inequality in (\ref{ast}) that$${\rm reg}(S/I(G\setminus N_G[v_1]))\leq {\rm reg}(S/I(G'))\leq \frac{k-3}{2}.$$

We now consider the graph $G\setminus v_1$. It is clear that $\{v_2, \ldots, v_k\}$ is a vertex cover of $G\setminus v_1$. Thus, the set of edges $\{v_2v_3, v_4v_5, \ldots, v_{k-1}v_k\}$ is a maximal matching of  $G\setminus v_1$ and the second inequality in (\ref{ast}) implies that ${\rm reg}(S/I(G\setminus v_1))\leq \frac{k-1}{2}$.

It now follows from \cite[Lemma 3.2]{dhs} that$${\rm reg}(S/I(G))\leq \max\{{\rm reg}(S/I(G\setminus N_G[v_1]))+1, {\rm reg}(S/I(G\setminus v_1))\}\leq \frac{k-1}{2}=\lfloor\frac{k}{2}\rfloor.$$
\end{proof}

We are now ready to prove the following theorem. It improves the inequality ${\rm reg}(S/I(G))\leq \min-match(G)$.

\begin{thm} \label{main2}
Let $\mathcal{H}$ be a subset of$$\mathcal{G}=\{C_k \mid k\geq 3\}\cup\{K_2\},$$with $K_2\in \mathcal{H}$. Then for every graph $G$ we have$${\rm reg}(S/I(G))\leq \min-match_{\mathcal{H}}(G).$$In particular, ${\rm reg}(S/I(G))\leq\min-match_{\{K_2, C_5\}}(G)$.
\end{thm}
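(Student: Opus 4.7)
The plan is by induction on $|V(G)|$. The case $E(G)=\emptyset$ is trivial. For the inductive step, let $H=H_1\sqcup\cdots\sqcup H_t$ be a maximal $\mathcal{H}$-subgraph of $G$ realising $\min-match_{\mathcal{H}}(G)$, with each $H_i\in\mathcal{H}$. Since $K_2\in\mathcal{H}$, maximality of $H$ forces $V(H)$ to be a vertex cover of $G$. If every $H_i$ equals $K_2$, then $H$ is itself a maximal matching of $G$ and the classical bound (\ref{ast}) gives ${\rm reg}(S/I(G))\le \min-match(G)\le{\rm match}(H)$, as required.

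Otherwise some $H_i$ is a cycle. Replacing every even-cycle component $C_{2\ell}$ of $H$ by a perfect matching on its vertex set preserves $V(H)$ and ${\rm match}(H)$ (swapping one $C_{2\ell}$ for $\ell$ copies of $K_2$, all in $\mathcal{H}$), so without loss of generality every cycle component of $H$ is odd. Fix such a component $H_1\cong C_k$ with $k$ odd, and label its vertices cyclically $v_1,\dots,v_k$.

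I would then apply \cite[Lemma 3.2]{dhs} at $v_1$, namely
$${\rm reg}(S/I(G))\le\max\{{\rm reg}(S/I(G\setminus N_G[v_1]))+1,\ {\rm reg}(S/I(G\setminus v_1))\},$$
and bound each term by the inductive hypothesis. For $G\setminus v_1$, take
$$H':=\{v_2v_3,\ v_4v_5,\ \dots,\ v_{k-1}v_k\}\cup H_2\cup\cdots\cup H_t,$$
an $\mathcal{H}$-subgraph whose vertex set $V(H)\setminus\{v_1\}$ covers $G\setminus v_1$ (since $V(H)$ covers $G$); hence $H'$ is maximal in $G\setminus v_1$ with ${\rm match}(H')=\tfrac{k-1}{2}+\sum_{i\ge 2}{\rm match}(H_i)={\rm match}(H)$, so induction gives ${\rm reg}(S/I(G\setminus v_1))\le{\rm match}(H)$. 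For $G\setminus N_G[v_1]$, note it is an induced subgraph of $G':=G\setminus\{v_1,v_2,v_k\}$, so by \cite[Lemma 3.1]{ha} its regularity is at most ${\rm reg}(S/I(G'))$. In $G'$ the subgraph
$$H'':=\{v_3v_4,\ v_5v_6,\ \dots,\ v_{k-2}v_{k-1}\}\cup H_2\cup\cdots\cup H_t$$
has vertex set $V(H)\setminus\{v_1,v_2,v_k\}$, a vertex cover of $G'$; so $H''$ is maximal there, with ${\rm match}(H'')={\rm match}(H)-1$. Induction then yields ${\rm reg}(S/I(G'))\le{\rm match}(H)-1$, whence ${\rm reg}(S/I(G\setminus N_G[v_1]))+1\le{\rm match}(H)$. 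Combining the two estimates gives ${\rm reg}(S/I(G))\le{\rm match}(H)=\min-match_{\mathcal{H}}(G)$.

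The main obstacle is bookkeeping rather than a single conceptual leap: one must verify that the chosen $H'$ and $H''$ really are maximal $\mathcal{H}$-subgraphs in the deleted ambient graphs, which reduces to the elementary observation that for any $S\subseteq V(G)$, the restriction $V(H)\setminus S$ of a vertex cover is still a vertex cover of $G\setminus S$. The preliminary reduction to odd cycle components is exactly what allows the two matching sizes $\tfrac{k-1}{2}$ and $\tfrac{k-3}{2}$ on the residual paths $v_2,\dots,v_k$ and $v_3,\dots,v_{k-1}$ to recombine with the $H_i$ ($i\ge 2$) into ${\rm match}(H)$ and ${\rm match}(H)-1$, which is precisely what is needed for the two inductive bounds to close and yield ${\rm match}(H)$ in the maximum.
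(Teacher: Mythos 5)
Your proof is correct, but it follows a genuinely different route from the paper's. The paper does not induct on $|V(G)|$ at the top level: it splits the edge ideal as $I(G)=\sum_{i=1}^{t}I(G_i)$, where $G_i$ collects all edges of $G$ meeting $V(H_i)$, invokes the subadditivity ${\rm reg}(S/I(G))\leq\sum_i{\rm reg}(S/I(G_i))$ for such sums (Herzog; Kalai--Meshulam), and then bounds each summand separately -- by the trivial maximal-matching bound when $H_i=K_2$, and by a standalone Lemma (\ref{cycle}) when $H_i$ is a cycle whose vertex set covers $G_i$. Your argument dispenses with the subadditivity result entirely and instead runs a single global induction on $|V(G)|$, peeling one vertex of an odd-cycle component via the Dao--Huneke--Schweig bound ${\rm reg}(S/I(G))\leq\max\{{\rm reg}(S/I(G\setminus N_G[v_1]))+1,\ {\rm reg}(S/I(G\setminus v_1))\}$; your preliminary replacement of even-cycle components by perfect matchings, and your observation that a vertex cover restricted to $G\setminus S$ remains a vertex cover (so that $H'$ and $H''$ are maximal $\mathcal{H}$--subgraphs of the smaller graphs), make the induction close cleanly. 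The local computation you perform on the odd cycle -- the residual paths $v_2,\dots,v_k$ and $v_3,\dots,v_{k-1}$ carrying matchings of sizes $\tfrac{k-1}{2}$ and $\tfrac{k-3}{2}$, together with \cite[Lemma 3.1]{ha} to pass from $G\setminus N_G[v_1]$ to $G\setminus\{v_1,v_2,v_k\}$ -- is essentially the same as the paper's proof of Lemma \ref{cycle}, but you apply it in the ambient graph rather than in the pieces $G_i$. What each approach buys: yours is more self-contained, needing only the deletion lemma and induced-subgraph monotonicity; the paper's isolates the cycle estimate as a reusable lemma and avoids induction on the whole graph, at the cost of importing the Kalai--Meshulam/Herzog subadditivity theorem.
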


\begin{proof}
Assume that $H$ is a maximal $\mathcal{H}$--subgraph of $G$ with $\min-match_{\mathcal{H}}(G)={\rm match}(H)$. Suppose that $H_1, \ldots, H_t\in \mathcal{H}$ are the connected components of $H$. Since $K_2\in \mathcal{H}$ and $H$ is a maximal $\mathcal{H}$--subgraph of $G$, we conclude that $V(G)\setminus V(H)$ is an independent subset of vertices of $G$. In other words, $V(H)$ is a vertex cover of $G$. For every integer $i$ with $1\leq i\leq t$, let $G_i$ be the graph with vertex set $V(G_i)=V(G)$ and edge set$$E(G_i)=\{e\in E(G)\mid \ {\rm at \ least \ one \ end \ point \ of } \ e \ {\rm belongs \ to} \ V(H_i)\}.$$Then $I(G)=\sum_{i=1}^tI(G_i)$ and it follows from \cite[Corollary 3.2]{he} (see also \cite[Theorem 1.2]{km}) that$${\rm reg}(S/I(G))\leq \sum_{i=1}^t{\rm reg}(S/I(G_i)).$$Thus we only need to prove that for every integer $i$ with $1\leq i\leq t$, the inequality ${\rm reg}(S/I(G_i))\leq {\rm match}(H_i)$ holds.

It is clear that for every $i$ with $1\leq i\leq t$, the vertex set of $H_i$ is a vertex cover of $G_i$. Hence, if $H_i$ is a cycle, then Lemma \ref{cycle} shows that$${\rm reg}(S/I(G_i))\leq \Big\lfloor\frac{\mid V(H_i)\mid}{2}\Big\rfloor={\rm match}(H_i).$$If $H_i=K_2$, then the single edge of $H_i$ forms a maximal matching for $G_i$. Thus$${\rm reg}(S/I(G_i))\leq \min-match(G_i)=1= {\rm match}(H_i),$$and the proof is complete.

The last part of the theorem is an immediate consequence of the first part.
\end{proof}

We close this section by summarizing the particular cases of Theorems \ref{main1} and \ref{main2}.

\begin{cor} \label{k5}
For every graph $G$, we have$$\ind-match_{\{K_2, C_5\}}(G)\leq{\rm reg}(S/I(G))\leq\min-match_{\{K_2, C_5\}}(G).$$
\end{cor}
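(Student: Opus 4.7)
The plan is to obtain both inequalities directly by specializing Theorems \ref{main1} and \ref{main2} to the collection $\mathcal{H}=\{K_2,C_5\}$. Since the work has been done in those theorems, the only thing to check is that $\{K_2,C_5\}$ satisfies the hypotheses of both.

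For the left inequality, I would apply Theorem \ref{main1}. Its hypothesis requires $\mathcal{H}\subseteq \mathcal{G}$, that is, every $H\in\mathcal{H}$ should be a connected graph with ${\rm reg}(S/I(H))={\rm match}(H)$. Clearly $K_2$ lies in $\mathcal{G}$ since ${\rm reg}(S/I(K_2))=1={\rm match}(K_2)$. For $C_5$ it is a well-known computation (for instance from the formula $\mathrm{reg}(S/I(C_n))=\lfloor (n-1)/3\rfloor+\varepsilon$ for cycles, or directly) that ${\rm reg}(S/I(C_5))=2={\rm match}(C_5)$; in fact Theorem \ref{main1} is stated precisely so that the $C_5$-case is flagged as the intended application. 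Hence the hypothesis holds and Theorem \ref{main1} gives $\ind-match_{\{K_2,C_5\}}(G)\leq {\rm reg}(S/I(G))$.

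For the right inequality, I would apply Theorem \ref{main2}. Its hypothesis requires $\mathcal{H}\subseteq \{C_k\mid k\geq 3\}\cup\{K_2\}$, with $K_2\in\mathcal{H}$. Both conditions are immediate for $\mathcal{H}=\{K_2,C_5\}$, so Theorem \ref{main2} yields ${\rm reg}(S/I(G))\leq \min-match_{\{K_2,C_5\}}(G)$.

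Combining the two displayed inequalities proves the corollary. There is no substantive obstacle here: the only non-trivial input is the value of ${\rm reg}(S/I(C_5))$, and this is already pointed out in the "In particular" sentence of Theorem \ref{main1}. The proof can therefore be written in a single sentence citing Theorems \ref{main1} and \ref{main2}.
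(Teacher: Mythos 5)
Your proposal is correct and matches the paper's own treatment: the corollary is obtained exactly by specializing Theorem \ref{main1} (after verifying ${\rm reg}(S/I(C_5))=2={\rm match}(C_5)$) and Theorem \ref{main2} to $\mathcal{H}=\{K_2,C_5\}$. The paper states the corollary precisely as this summary of the two ``In particular'' cases, so no further comment is needed.
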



\section{Cohen--Macaulay graphs with girth at least five} \label{sec4}

We proved in Theorem \ref{main1} that for every graph $G$ we have $\ind-match_{\{K_2, C_5\}}(G)\leq{\rm reg}(S/I(G))$. A natural question is to determine the graphs for which the equality ${\rm reg}(S/I(G))=\ind-match_{\{K_2, C_5\}}(G)$ holds. There are several classes of graphs with ${\rm reg}(S/I(G))=\ind-match(G)$. They include\\

$\bullet$ chordal graphs (see \cite[Corollary 6.9]{hv}),

$\bullet$ sequentially Cohen--Macaulay bipartite graphs (see \cite[Theorem 3.3]{v}),

$\bullet$ unmixed bipartite graphs (see \cite[Theorem 1.1]{ku}),

$\bullet$ very well--covered graphs (see \cite[Theorem 1.3]{mmcrty}),

$\bullet$ vertex decomposable graphs which have no $C_5$--subgraph (see \cite[Theorem 2.4]{khm}).\\

As $\ind-match(G)$ is a lower bound for $\ind-match_{\{K_2, C_5\}}(G)$, Corollary \ref{k5} implies that for every graph $G$ in the above classes, ${\rm reg}(S/I(G))=\ind-match_{\{K_2, C_5\}}(G)$. In Theorem \ref{girth5}, we determine a new class of graphs, namely Cohen--Macaulay graphs with girth at least five, for which the equality ${\rm reg}(S/I(G))=\ind-match_{\{K_2, C_5\}}(G)$ holds. The proof of Theorem \ref{girth5} is based on the characterization of Cohen--Macaulay graphs with girth at least five, given in \cite{hmt}.

Let $G$ be a graph. A $5$-cycle of $G$ is said to be {\it basic} if it does not contain two adjacent vertices
of degree three or more in $G$. An edge of $G$ which is incident to a vertex of degree $1$ is called a {\it pendant} edge.
Let $C(G)$ denote the set of all vertices which belong to basic $5$-cycles and let $P(G)$
denote the set of vertices which are incident to pendant edges of $G$.

\begin{dfn} \label{defpc}
A graph $G$ is said to belong to the class $\mathcal{PC}$ if
\begin{itemize}
\item[(1)] $V(G)$ can be partitioned as $V(G)=P(G)\cup C(G)$ and
\item[(2)] the pendant edges form a perfect matching of $P(G)$ and
\item[(3)] the vertices of basic $5$-cycles form a partition of $C(G)$.
\end{itemize}
\end{dfn}

\begin{prop} \label{pc}
For every graph $G$ in the class $\mathcal{PC}$, we have$${\rm reg}(S/I(G))=\ind-match_{\{K_2, C_5\}}(G).$$
\end{prop}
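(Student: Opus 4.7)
The lower bound $\ind-match_{\{K_2,C_5\}}(G) \le {\rm reg}(S/I(G))$ is immediate from Theorem \ref{main1}, so the content of the proposition lies in the reverse inequality. I would prove this by induction on $|V(G)|$; the edgeless case is trivial, and additivity of both sides on connected components reduces the problem to connected $G$.

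Suppose first that $G$ has a pendant edge $uv$, with $u$ the leaf and $v$ the stem. I apply the recursion of \cite[Lemma 3.2]{dhs} at $v$:
\[
{\rm reg}(S/I(G)) \le \max\{{\rm reg}(S/I(G \setminus N_G[v])) + 1,\ {\rm reg}(S/I(G \setminus v))\},
\]
and bound each term by $\ind-match_{\{K_2,C_5\}}(G)$. Since $u$ is isolated in $G \setminus v$, I have ${\rm reg}(S/I(G \setminus v)) = {\rm reg}(S/I(G \setminus \{u,v\}))$, and the graph $G \setminus \{u,v\}$ remains in $\mathcal{PC}$ (using $P(G) \cap C(G) = \emptyset$: no basic $5$-cycle is touched, and the pendant-edge matching of $P(G)$ is simply shortened by one pair); so induction gives ${\rm reg}(S/I(G \setminus \{u,v\})) = \ind-match_{\{K_2,C_5\}}(G \setminus \{u,v\}) \le \ind-match_{\{K_2,C_5\}}(G)$. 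For the other term, $G \setminus N_G[v]$—after discarding the isolated vertices stranded when stem-neighbors of $v$ lose their paired leaves—again lies in $\mathcal{PC}$. The key structural point is that any neighbor $w$ of $v$ lying in $C(G)$ must be of high degree on its basic $5$-cycle $C_w$, so basicness forces the two cycle-neighbors of $w$ on $C_w$ to have degree $2$ in $G$ and thus to become leaves in $G \setminus N_G[v]$, producing new pendant edges in the cleaned reduced graph. Induction then yields ${\rm reg}(S/I(G \setminus N_G[v])) = \ind-match_{\{K_2,C_5\}}(G \setminus N_G[v])$, and since $u,v$ have no neighbors in $V(G \setminus N_G[v])$ (because $N_G(v) \subseteq N_G[v]$), the edge $uv$ can be adjoined as a $K_2$-component to any realizing induced $\{K_2,C_5\}$-subgraph of $G \setminus N_G[v]$, giving $\ind-match_{\{K_2,C_5\}}(G \setminus N_G[v]) + 1 \le \ind-match_{\{K_2,C_5\}}(G)$.

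If $G$ has no pendant edge, then $V(G) = C(G)$. The case $G = C_5$ is direct, so I may assume a vertex $v_1$ of degree $\ge 3$ exists on some basic $5$-cycle $C = v_1 v_2 v_3 v_4 v_5$. Apply \cite[Lemma 3.2]{dhs} at $v_1$. By basicness of $C$, the cycle-neighbors $v_2, v_5$ have degree $2$ in $G$, so in $G \setminus v_1$ they become leaves; the four cycle-vertices $V(C)\setminus\{v_1\}$ yield the two new pendant edges $v_2 v_3$ and $v_4 v_5$, while every other basic $5$-cycle of $G$ remains basic under vertex deletion. Hence $G \setminus v_1 \in \mathcal{PC}$, and induction applies. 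A parallel basicness-driven analysis, identical in spirit to the pendant case, shows $G \setminus N_G[v_1]$ (after cleanup) is in $\mathcal{PC}$, handling the other bound, and the bounds on $\ind-match_{\{K_2,C_5\}}$ propagate as in the pendant case.

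The main obstacle is the case analysis verifying that each reduced graph actually lies in $\mathcal{PC}$ (up to discarding isolated vertices). This rests on the two crucial structural inputs of the $\mathcal{PC}$-definition: the disjointness $P(G) \cap C(G) = \emptyset$ (which separates the effect of removing stem-neighbors from that of removing cycle-neighbors), and the basicness of each $5$-cycle (which guarantees that removing a high-degree cycle-vertex leaves behind a $P_4$ whose endpoints are leaves, i.e., fresh pendant edges in the reduced graph).
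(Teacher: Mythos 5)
Your overall strategy coincides with the paper's: reduce to connected graphs, induct, apply the recursion ${\rm reg}(S/I(G)) \le \max\{{\rm reg}(S/I(G\setminus N_G[w]))+1,\ {\rm reg}(S/I(G\setminus w))\}$ of \cite[Lemma 3.2]{dhs} at a well-chosen vertex $w$, verify that both reduced graphs stay in $\mathcal{PC}$ up to isolated vertices, and control $\ind-match_{\{K_2,C_5\}}$ of the reduced graphs. The only organizational difference is that you branch on whether a pendant edge exists and recurse at its stem, whereas the paper branches on whether $C(G)=\emptyset$, disposes of the all-whisker case by citing \cite[Theorem 13]{bv}, and otherwise always recurses at a degree-$\ge 3$ vertex of a basic $5$-cycle; your variant is slightly more self-contained. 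Your pendant-edge case is sound: the $\mathcal{PC}$-preservation claims check out, and adjoining $uv$ to a realizing induced $\{K_2,C_5\}$-subgraph $H$ of $G\setminus N_G[v]$ is legitimate precisely because the leaf $u$ has no neighbor other than $v$.

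The gap is in the no-pendant case, where you assert that the bound $\ind-match_{\{K_2,C_5\}}(G\setminus N_G[v_1]) \le \ind-match_{\{K_2,C_5\}}(G)-1$ ``propagates as in the pendant case.'' It does not: at a cycle vertex $v_1$ there is no leaf available, and if you try to adjoin the cycle edge $v_1v_2$ (resp.\ $v_1v_5$) to $H$, you create the induced edge $v_2v_3$ (resp.\ $v_4v_5$) whenever $v_3$ (resp.\ $v_4$) lies in $V(H)$, so the enlarged subgraph need not be an induced $\{K_2,C_5\}$-subgraph. The paper's Claim 2 supplies the missing argument: since $v_3$ and $v_4$ are adjacent on the basic $5$-cycle, one may assume $\deg_G(v_4)=2$, whence $\deg_{G\setminus N_G[v_1]}(v_4)\le 1$ and $v_4$ can occur in $H$ only as part of a $K_2$-component equal to $v_3v_4$; in that event one replaces $v_3v_4$ by the entire $5$-cycle $C$ (a net gain of one in the matching number, and induced because $H$ was induced and all other vertices of $C$ have all their neighbors inside $V(C)\cup N_G[v_1]$), while if $v_4\notin V(H)$ one may safely adjoin the single edge $v_1v_5$. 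This swap-versus-adjoin dichotomy is a genuinely different argument from the pendant case and is the one nontrivial step your sketch omits; the remainder of your outline, including the $\mathcal{PC}$-preservation case analysis you correctly identify as the main bookkeeping burden, goes through as you describe.
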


\begin{proof}
Assume that $G_1, \ldots, G_t$ are the connected components of $G$. As the minimal free resolution of $S/I(G)$ is the tensor product of the minimal free resolution of $S/I(G_i)$'s, it follows that ${\rm reg}(S/I(G))=\sum_{i=1}^t{\rm reg}(S/I(G_i))$. On the other hand,$$\ind-match_{\{K_2, C_5\}}(G)=\sum_{i=1}^t\ind-match_{\{K_2, C_5\}}(G_i).$$Thus it is sufficient to prove the theorem for connected graphs.

Assume that $G$ is a connected graph in the class $\mathcal{PC}$. We use induction on the number of edges of $G$. If $G$ has one edge, then $G\cong K_2$ and there is nothing to prove. Thus, assume that $G$ has at least two edges. The vertex set of $G$ can be partitioned as $V(G)=P(G)\cup C(G)$, where $C(G)$ is the set of all vertices which belong to basic $5$-cycles and $P(G)$ is the set of vertices which are incident to pendant edges of $G$. If $C(G)=\emptyset$, then $V(G)=P(G)$. As the pendant edges form a perfect matching of $P(G)$, it follows that $G$ is a whiskered graph and by \cite[Theorem 13]{bv} and Corollary \ref{k5}, we have$$\ind-match_{\{K_2, C_5\}}(G)\leq{\rm reg}(S/I(G))=\ind-match(G)\leq \ind-match_{\{K_2, C_5\}}(G)$$and hence ${\rm reg}(S/I(G))=\ind-match_{\{K_2, C_5\}}(G)$.

Therefore suppose that $C(G)\neq\emptyset$ and let $L\cong C_5$ be a basic $5$-cycle of $G$. Assume that $V(L)=\{x, y ,z ,u, v\}$ is the vertex set of $L$ and $E(L)=\{xy, yz, zu, uv, vx\}$ is its edge set. If $G=L$, then ${\rm reg}(S/I(G))=2=\ind-match_{\{K_2, C_5\}}(G)$ and there is nothing to prove. Thus assume that $G\neq L$. Since $G$ is connected, $L$ has a vertex of degree at least three in $G$. Without loss of generality, we may assume that ${\rm deg}_G(x)\geq 3$. Set $G'=G\setminus x$. As $L$ is a basic $5$-cycles, ${\rm deg}_G(v)={\rm deg}_G(y)=2$. Therefore the edges $yz$ and $uv$ are pendant edges in $G'$. This shows that the vertex set of $G'$ can be partitioned as $V(G')=P(G')\cup C(G')$, where$$P(G')=P(G)\cup\{u,v, y,z\}$$and $C(G')=C(G)\setminus V(L)$. Hence $G'$ belongs to the class $\mathcal{PC}$ and by induction hypothesis, we conclude that$${\rm reg}(S/I(G'))=\ind-match_{\{K_2, C_5\}}(G')\leq \ind-match_{\{K_2, C_5\}}(G),$$where the inequality follows from the fact that $G'$ is an induced subgraph of $G$.

We now set $G''=G\setminus N_G[x]$. By the definition of basic $5$-cycles, at least one of the vertices $u$ and $z$ has degree two in $G$. Without loss of generality, assume that ${\rm deg}_G(u)=2$. It follows that ${\rm deg}_{G''}(u)\leq 1$.\\

{\bf Claim 1.} The graph obtained from $G''$ by deleting the isolated vertices belongs to the class $\mathcal{PC}$.

{\it Proof of Claim 1.} Let $C\in C(G)$ be a basic $5$-cycle of $G$. In order to prove Claim 1, it is sufficient to prove that either $C\setminus N_G[x]$ is a basic $5$-cycle of $G''$ or its non-isolated vertices can be partitioned to pendant edges of $G''$, where the pendant edges form a perfect matching for non-isolated vertices of $C\setminus N_G[x]$. As $C$ has at most two vertices of degree at least three, we have $\mid V(C)\cap N_G[x]\mid\leq 2$, if $C\neq L$. We consider the following cases.

{\bf Case 1.} Assume that $C=L$. If $xz\in E(G)$, then $L\setminus N_G[x]$ has only one vertex, namely $u$ which is an isolated vertex in $G''$. Thus, assume that $xz\notin E(G)$. Then $L\setminus N_G[x]=uz$, which is a pendant edge of $G''$.

{\bf Case 2.} If $V(C)\cap N_G[x]=\emptyset$, then $C\setminus N_G[x]=C$, which is a basic $5$-cycle.

{\bf Case 3.} Assume that $\mid V(C) \cap N_G[x]\mid=1$. Let $V(C)=\{w_1, w_2, w_3, w_4, w_5\}$ be the vertex set of $C$ and $E(C)=\{w_1w_2, w_2w_3, w_3w_4, w_4w_5, w_1w_5\}$ be its edge set. Without loss of generality, assume that $V(C)\cap N_G[x]=\{w_1\}$. Then it follows from the definition of basic $5$-cycles that $w_2$ and $w_5$ have degree one in $G''$. Thus, the edges $w_2w_3$ and $w_4w_5$ are the pendant edges of $G''$ which form a perfect matching for $C\setminus N_G[x]$.

{\bf Case 4.} Assume that $\mid V(C)\cap N_G[x]\mid=2$. Similar to Case 3, suppose that $V(C)=\{w_1, w_2, w_3, w_4, w_5\}$ is the vertex set of $C$ and $E(C)=\{w_1w_2, w_2w_3, w_3w_4, w_4w_5, w_1w_5\}$ is its edge set. Since $C$ has no adjacent vertices of degree at least three, we may assume that $V(C)\cap N_G[x]=\{w_1, w_3\}$. As ${\rm deg}_G(w_2)=2$, it follows that $w_2$ is an isolated vertex of $G''$. On the other hand ${\rm deg}_G(w_4)={\rm deg}_G(w_5)=2$, which implies that the edge $w_4w_5$ is a pendant edge in $G''$, which forms a perfect matching for the graph obtained  from $C\setminus N_G[x]$ by ignoring its isolated vertices.

The proof of Claim 1 is thus complete.\\

{\bf Claim 2.} $\ind-match_{\{K_2, C_5\}}(G'')\leq \ind-match_{\{K_2, C_5\}}(G)-1$.

{\it Proof of Claim 2.} Let $H$ be an induced $\{K_2, C_5\}$-subgraph of $G''$ with$$\ind-match_{\{K_2, C_5\}}(G'')={\rm match}(H).$$Assume that $H_1, \ldots, H_{\ell}$ are the connected components of $H$. By relabeling the $H_i$'s (if necessary), we may assume that there exists an integer $0\leq s\leq \ell$ such that $H_1, \ldots, H_s$ are isomorphic to $K_2$ and $H_{s+1}, \ldots, H_{\ell}$ are isomorphic to $C_5$. Since ${\rm deg}_{G''}(u)\leq 1$, the edge $uz$ does not belong to $\cup_{i=s+1}^{\ell}E(H_i)$. Remind that the degree of $v$ and $y$ in $G$ is equal to two. If $uz=H_i$, for some $1\leq i\leq s$, then$$H_1\sqcup\ldots \sqcup H_{i-1}\sqcup H_{i+1}\sqcup\ldots \sqcup H_s\sqcup H_{s+1}\ldots \sqcup H_{\ell}\sqcup L$$is an induced $\{K_2, C_5\}$-subgraph of $G$ whose matching number is one more than the matching number of $H$. This implies the claimed inequality. Therefore, assume that $uz$ does not belong to $\cup_{i=1}^{\ell}E(H_i)$. This together with ${\rm deg}_{G''}(u)\leq 1$, implies that $u\notin V(H)$. Since $N_G(v)=\{u,x\}$, we conclude that the (disjoint) union of $H$ and the edge $vx$ is an induced $\{K_2, C_5\}$-subgraph of $G$ whose matching number is one more than the matching number of $H$. This completes the proof of Claim 2.\\

Since the isolated vertices have no effect on the edge ideal, it follows from Claims 1, 2 and the induction hypothesis that$${\rm reg}(S/I(G''))=\ind-match_{\{K_2, C_5\}}(G'')\leq \ind-match_{\{K_2, C_5\}}(G)-1.$$

Finally by \cite[Lemma 3.2]{dhs}, we conclude that$${\rm reg}(S/I(G))\leq \max\{{\rm reg}(S/I(G''))+1, {\rm reg}(S/I(G'))\}\leq \ind-match_{\{K_2, C_5\}}(G),$$which together with Corollary \ref{k5} implies that ${\rm reg}(S/I(G))=\ind-match_{\{K_2, C_5\}}(G)$.
\end{proof}

We are now ready to prove the main result of this section.

\begin{thm} \label{girth5}
Let $G$ be a Cohen--Macaulay graph with ${\rm girth}(G)\geq 5$. Then$${\rm reg}(S/I(G))=\ind-match_{\{K_2, C_5\}}(G).$$
\end{thm}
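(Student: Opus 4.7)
The plan is to reduce Theorem \ref{girth5} directly to Proposition \ref{pc}, using the structural characterization of Cohen--Macaulay graphs with girth at least five given in \cite{hmt}. The key observation is that Proposition \ref{pc} already establishes the desired equality for every graph in the class $\mathcal{PC}$, so what remains is merely to verify that the hypotheses of Theorem \ref{girth5} place $G$ inside this class.

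First, I would recall the characterization from \cite{hmt}: if $G$ is a Cohen--Macaulay graph with ${\rm girth}(G) \geq 5$, then $G$ belongs to the class $\mathcal{PC}$, that is, $V(G)$ partitions as $V(G) = P(G) \cup C(G)$ where the pendant edges form a perfect matching of $P(G)$ and the basic $5$-cycles partition $C(G)$. (Note that the hypothesis ${\rm girth}(G) \geq 5$ is essential here, since it rules out triangles and $4$-cycles and forces the local structure around every vertex to be of the form anticipated by the definition of $\mathcal{PC}$.)

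Once $G \in \mathcal{PC}$ is established, I would simply apply Proposition \ref{pc} to obtain
\[
{\rm reg}(S/I(G)) = \ind\text{-}match_{\{K_2, C_5\}}(G),
\]
which is exactly the conclusion of Theorem \ref{girth5}.

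There is essentially no obstacle internal to this proof, since all the real work has already been done in Proposition \ref{pc}; the only subtlety is to make sure that the statement of \cite{hmt} is invoked in the precise form needed, and to remark (as was done in the introduction) that the characterization is what permits us to bypass the technical induction on the number of edges carried out in the proof of Proposition \ref{pc}. The proof would therefore be a short paragraph: cite \cite{hmt} for $G \in \mathcal{PC}$, then cite Proposition \ref{pc} for the equality.
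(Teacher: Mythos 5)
Your proposal is correct and takes essentially the same route as the paper: invoke the characterization from \cite[Theorem 2.4]{hmt} to place the graph in the class $\mathcal{PC}$, then apply Proposition \ref{pc}. The only bookkeeping you omit is that the paper first discards isolated vertices and passes to connected components (using \cite{f2}, which says a graph is Cohen--Macaulay if and only if its components are), since the cited result from \cite{hmt} is applied to connected graphs and a graph with an isolated vertex cannot belong to $\mathcal{PC}$.
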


\begin{proof}
Without loss of generality, we may assume that $G$ has no isolated vertex. Let $G_1, \ldots, G_m$ be the connected components of $G$. By \cite{f2}, we know that a graph is Cohen--Macaulay if and only if its connected components are Cohen--Macaulay. Thus, $G_1, \ldots, G_m$ are Cohen--Macaulay graphs and it follows from \cite[Theorem 2.4]{hmt} that they belong to the class $\mathcal{PC}$. Hence by Proposition \ref{pc}, we have$${\rm reg}(S/I(G))=\sum_{i=1}^m{\rm reg}(S/I(G_i))=\sum_{i=1}^m\ind-match_{\{K_2, C_5\}}(G_i)=\ind-match_{\{K_2, C_5\}}(G),$$where the first equality follows from the fact that the minimal free resolution of $S/I(G)$ is the tensor product of the minimal free resolution of $S/I(G_i)$'s.
\end{proof}


\section{Paw--free graphs} \label{sec5}

In this section, we study the regularity of edge ideal of paw--free graphs. These graphs are defined as follows.

\begin{dfn} \label{defpaw}
A graph with vertices $v,x,y,z$ and edges $xy, yz, xz, vx$ is called a {\it paw}. The graph $G$ is said to be {\it paw--free} if it has no paw as an induced subgraph.
\end{dfn}

We prove in Theorem \ref{2cmpaw} that for every connected paw--free doubly Cohen--Macaulay graph other than complete graphs and $5$-cycle graph, we have  $$\ind-match_{\{K_2, C_5\}}(G)+1\leq{\rm reg}(S/I(G)).$$To prove Theorem \ref{2cmpaw}, we first show in Theorem \ref{2cm} that the regularity of the Stanley--Reisner ring of a doubly Cohen--Macaulay simplicial complex is equal to its dimension. This result looks to be interesting by itself. In order to prove Theorem \ref{2cm}, we need the following lemma which states that the entries of the $h$-vector of a doubly Cohen--Macaulay simplicial complex are strictly positive.

\begin{lem} \label{hvec}
Let $\Delta$ be a $(d-1)$-dimensional doubly Cohen--Macaulay simplicial complex with $h(\Delta)=(h_0, h_1, \ldots, h_d)$. Then for every integer $0\leq i\leq d$, we have $h_i\geq 1$.
\end{lem}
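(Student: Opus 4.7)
The plan is to proceed by induction on $d$. When $d=0$ only $h_0=1$ appears; when $d=1$, $\Delta$ is a $0$-dimensional complex, and the doubly Cohen--Macaulay hypothesis forces it to have at least two vertices (otherwise deleting the unique vertex would yield the empty complex, of dimension $-1$), so $h_0=1$ and $h_1=f_0-1\ge 1$.

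For the inductive step, fix any vertex $v\in V(\Delta)$ and partition the faces of $\Delta$ into those avoiding $v$ (which form $\mathrm{del}_\Delta v$) and those containing $v$ (which are in bijection with the faces of $\mathrm{lk}_\Delta v$). Summing the monomial contributions $(z/(1-z))^{|F|}$ to the Hilbert series gives
\[
\mathrm{Hilb}_{\mathbb{K}[\Delta]}(z)=\mathrm{Hilb}_{\mathbb{K}[\mathrm{del}_\Delta v]}(z)+\frac{z}{1-z}\,\mathrm{Hilb}_{\mathbb{K}[\mathrm{lk}_\Delta v]}(z).
\]
By the doubly Cohen--Macaulay property, $\mathrm{del}_\Delta v$ is Cohen--Macaulay of dimension $d-1$; and $\mathrm{lk}_\Delta v$ is Cohen--Macaulay of dimension $d-2$ (links of Cohen--Macaulay complexes are Cohen--Macaulay, and $v$ lies in a facet of top dimension since $\Delta$ is pure). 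Writing each Hilbert series in the form $h(z)/(1-z)^{\dim+1}$ and clearing the common denominator $(1-z)^d$ produces the key identity
\[
h_i(\Delta)=h_i(\mathrm{del}_\Delta v)+h_{i-1}(\mathrm{lk}_\Delta v), \qquad 0\le i\le d,
\]
with the convention $h_{-1}=0$.

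The heart of the proof is a sublemma: $\mathrm{lk}_\Delta v$ is itself doubly Cohen--Macaulay of dimension $d-2$. For any vertex $w$ of $\mathrm{lk}_\Delta v$, unfolding the definitions yields
\[
(\mathrm{lk}_\Delta v)\setminus w \;=\; \mathrm{lk}_{\mathrm{del}_\Delta w}\,v.
\]
Since $\mathrm{del}_\Delta w$ is Cohen--Macaulay of dimension $d-1$ by the hypothesis on $\Delta$, it is pure of that dimension; hence the vertex $v$ lies in some facet of $\mathrm{del}_\Delta w$ of dimension $d-1$, so its link there is Cohen--Macaulay of dimension $d-2$. With the sublemma established, the induction hypothesis supplies $h_{i-1}(\mathrm{lk}_\Delta v)\ge 1$ for $1\le i\le d$; combined with $h_i(\mathrm{del}_\Delta v)\ge 0$, which follows from Cohen--Macaulayness, the decomposition above yields $h_i(\Delta)\ge 1$ for all $1\le i\le d$, while $h_0=1$ is automatic. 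The principal obstacle is this sublemma about the behavior of links under deletion; once granted, the remainder is routine manipulation of Hilbert series together with standard facts about Cohen--Macaulay complexes.
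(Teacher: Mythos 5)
Your proof is correct and follows essentially the same route as the paper: induction on $d$ via the decomposition $h_i(\Delta)=h_i(\mathrm{del}_\Delta v)+h_{i-1}(\mathrm{lk}_\Delta v)$, using nonnegativity of the $h$-vector of the Cohen--Macaulay deletion and the inductive hypothesis applied to the link. The only difference is that you supply direct arguments for the two facts the paper simply cites --- the $h$-vector additivity (Athanasiadis, Lemma 4.1) via the Hilbert series identity, and the fact that links of doubly Cohen--Macaulay complexes are doubly Cohen--Macaulay (Walker, Proposition 9.7) via the identity $(\mathrm{lk}_\Delta v)\setminus w=\mathrm{lk}_{\mathrm{del}_\Delta w}v$ --- and both of these arguments are sound.
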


\begin{proof}
We use induction on $d$. There is nothing to prove for $d=0$, as $h_0=1$. Thus assume that $d\geq 1$. The equality $h_0=1$ is true for every simplicial complex. Therefore, we must prove that $h_1, \ldots, h_d$ are positive. Suppose that $v$ is an arbitrary vertex of $\Delta$. Set $$h({\rm del}_{\Delta} v)=(h'_0, h'_1, \ldots, h'_d) \ \ \ \ \ \ \ \ {\rm and} \ \ \ \ \ \ \ \ h({\rm lk}_{\Delta} v)=(h''_0, h''_1, \ldots, h''_{d-1}).$$By \cite[Proposition 9.7]{w}, we know that ${\rm lk}_{\Delta} v$ is a $(d-2)$-dimensional doubly Cohen--Macaulay simplicial complex. Therefore, the induction hypothesis implies that the integers $h''_0, h''_1, \ldots, h''_{d-1}$ are strictly positive. On the other hand, ${\rm del}_{\Delta} v$ is a Cohen--Macaulay simplicial complex and it follows  from Macaulay's Theorem \cite[Corollary 4.1.10]{bh} that $h'_0, h'_1, \ldots, h'_d$ are nonnegative integers. Finally, using \cite[Lemma 4.1]{a}, we deduce that for every $i=1, \ldots, d$, $$h_i=h'_i+h''_{i-1}\geq 1.$$
\end{proof}

\begin{thm} \label{2cm}
Let $\Delta$ be a doubly Cohen--Macaulay simplicial complex. Then ${\rm reg}(\mathbb{K}[\Delta])={\rm dim}(\mathbb{K}[\Delta])$.
\end{thm}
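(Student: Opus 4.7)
The plan is to combine Lemma \ref{hvec} with the standard formula that reads the regularity of a Cohen--Macaulay standard graded $\mathbb{K}$-algebra off its $h$-vector. Set $d={\rm dim}(\mathbb{K}[\Delta])=\dim\Delta+1$ and write $h(\Delta)=(h_0,h_1,\ldots,h_d)$.

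First I would recall the general fact that, for any Cohen--Macaulay graded $\mathbb{K}$-algebra $A$ of Krull dimension $d$, one has ${\rm reg}(A)=\max\{i\mid h_i\neq 0\}$. The standard derivation proceeds by first passing to an infinite extension of $\mathbb{K}$ (which changes neither the Hilbert series nor the regularity), and then choosing a linear system of parameters $\theta_1,\ldots,\theta_d$. Because $A$ is Cohen--Macaulay, these form a regular sequence, and quotienting by a regular sequence of linear forms preserves the regularity. The Artinian quotient $A/(\theta_1,\ldots,\theta_d)$ then has Hilbert series equal to the $h$-polynomial $h_0+h_1z+\cdots+h_dz^d$, and the regularity of an Artinian graded $\mathbb{K}$-algebra coincides with its top nonzero degree.

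Applying this to $\mathbb{K}[\Delta]$---which is Cohen--Macaulay, since $\Delta$ is even doubly Cohen--Macaulay---I obtain ${\rm reg}(\mathbb{K}[\Delta])=\max\{i\mid h_i\neq 0\}$. Lemma \ref{hvec} now guarantees $h_i\geq 1$ for every $0\leq i\leq d$; in particular $h_d\geq 1$, so this maximum is exactly $d$. Therefore ${\rm reg}(\mathbb{K}[\Delta])=d={\rm dim}(\mathbb{K}[\Delta])$, as desired.

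The only genuine obstacle is the formula ${\rm reg}(A)=\max\{i\mid h_i\neq 0\}$ in the Cohen--Macaulay case. This is a classical computation---equivalent, via the identity ${\rm reg}(A)=a(A)+d$, to the fact that the $a$-invariant of a Cohen--Macaulay Stanley--Reisner ring equals the degree of its $h$-polynomial minus $d$---so once this is cited, everything else in the argument reduces to a single appeal to Lemma \ref{hvec}.
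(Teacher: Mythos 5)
Your proposal is correct and follows essentially the same route as the paper: both pass to an infinite field, reduce by a regular sequence of $d$ linear forms to an Artinian quotient whose Hilbert series is the $h$-polynomial, identify the regularity with the top nonzero degree of that polynomial, and then invoke Lemma \ref{hvec} to conclude that $h_d\geq 1$. No substantive differences.
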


\begin{proof}
Without loss of generality, we may assume that $\mathbb{K}$ is an infinite field. Suppose that ${\rm dim} \Delta=d-1$ and $h(\Delta)=(h_0, h_1, \ldots, h_d)$ is the $h$-vector of $\Delta$. Therefore the Hilbert series of $\mathbb{K}[\Delta]$ is$${\rm Hilb}_{\mathbb{K}[\Delta]}(z)=\frac{h_dz^d+h_{d-1}z^{d-1}+\ldots + h_1z+h_0}{(1-z)^d}.$$As $\Delta$ is Cohen-Macaulay and $\mathbb{K}$ is infinite, we can choose a $\mathbb{K}[\Delta]$-regular sequence of linear forms ${\bf u}=u_1, u_2, \ldots, u_d$ in $S$. Then$${\rm Hilb}_{\mathbb{K}[\Delta]/({\bf u})\mathbb{K}[\Delta]}(z)=h_dz^d+h_{d-1}z^{d-1}+\ldots + h_1z+h_0.$$By \cite[Theorem 20.2]{p'}, we have ${\rm reg}(\mathbb{K}[\Delta])={\rm reg}(\mathbb{K}[\Delta]/({\bf u})\mathbb{K}[\Delta])$. Since $\mathbb{K}[\Delta]/({\bf u})\mathbb{K}[\Delta]$ is Artinian, \cite[Exercise 20.18]{e} implies that $${\rm reg}(\mathbb{K}[\Delta]/({\bf u})\mathbb{K}[\Delta])=\max \{i \mid h_i\neq 0\}.$$The assertion now follows from Lemma \ref{hvec}.
\end{proof}

Theorem \ref{2cmpaw} is essentially a combination of Theorem \ref{2cm} and the following structural lemma.

\begin{lem} \label{pawf}
Let $G$ be a paw--free graph without isolated vertices and assume that $\alpha(G)=\ind-match_{\{K_2, C_5\}}(G)$. Then every connected component of $G$ is either a complete graph or a $5$-cycle graph.
\end{lem}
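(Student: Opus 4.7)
Both $\alpha$ and $\ind-match_{\{K_2,C_5\}}$ are additive over connected components of $G$ (the latter because the disjoint union of optimal induced subgraphs in the various components is itself an induced $\{K_2,C_5\}$--subgraph of $G$), while each component $C$ satisfies $\ind-match_{\{K_2,C_5\}}(C)\leq\alpha(C)$. Consequently the hypothesis $\alpha(G)=\ind-match_{\{K_2,C_5\}}(G)$ passes to every component of $G$, and it suffices to treat a connected graph $G$. Olariu's theorem then splits connected paw--free graphs into two types: complete multipartite graphs and triangle--free graphs. In the complete multipartite case $G=K_{n_1,\dots,n_t}$, any two disjoint edges of $G$ induce a $C_4$ (never $2K_2$), and any five vertices meeting three distinct parts contain a triangle; so $G$ admits no induced $2K_2$ and no induced $C_5$. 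Hence $\ind-match_{\{K_2,C_5\}}(G)=1$, and the equality forces $\max n_i=1$, i.e.\ $G$ is complete.

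In the triangle--free case I will prove $G\in\{K_2,C_5\}$. Fix an induced $\{K_2,C_5\}$--subgraph $H$ of $G$ achieving ${\rm match}(H)=\alpha(G)=:k$. Since ${\rm match}=\alpha$ on any disjoint union of copies of $K_2$ and $C_5$, a maximum independent set $I_H$ of $H$ has size $k$ and is therefore a maximum independent set of $G$ as well. The crux is to show $V(H)=V(G)$: granted this, connectedness of $G$ together with the disjoint--union structure of $H$ forces $H$ to be a single $K_2$ or a single $C_5$, whence $G=H\in\{K_2,C_5\}$.

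To establish $V(H)=V(G)$, I argue by contradiction, assuming there exists $v\in V(G)\setminus V(H)$. Maximality of $I_H$ in $G$ gives a nonempty set $N:=N_G(v)\cap I_H$. The plan is to build a larger independent set $I'=(I_H\setminus N)\cup\{v\}\cup R$, where $R$ contains substitutes chosen component by component from each $H$--component meeting $N$: replace a $K_2$--component $\{w,w'\}$ with $w\in N$ by $w'$; and in a $C_5$--component $b_1b_2b_3b_4b_5$ with $I_H\cap C_5=\{b_1,b_3\}$, replace $b_1\in N$ by $b_5$, replace $b_3\in N$ by $b_4$, and replace the pair $\{b_1,b_3\}$ (when both lie in $N$) by $\{b_2,b_4\}$. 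Then $|I'|=|I_H|+1=k+1$. Independence of $I'$ rests on two facts: because $H$ is an induced subgraph of $G$, each substitute's neighbors in $V(H)$ lie entirely inside its own component of $H$, and within that component its only $I_H$--neighbors are exactly the elements of $N$ being removed (this is why $b_5$ is chosen rather than $b_2$: the latter is adjacent to $b_3\in I_H\setminus N$); and triangle--freeness of $G$ rules out any edge between $v$ and any substitute, since both would be neighbors of a common $w\in N$ and would close a triangle. Thus $|I'|=k+1>\alpha(G)$, the desired contradiction. The main technical obstacle is the case analysis organizing the substitute rule, particularly when a single $C_5$--component supplies two members of $N$: there triangle--freeness forces $v$ to be nonadjacent to $b_2,b_4,b_5$, and it is precisely this that allows the pair $\{b_2,b_4\}$ to sit independently with $v$ inside $I'$.
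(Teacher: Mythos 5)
Your proof is correct, but it takes a genuinely different route from the paper's. The paper argues by induction on $|V(G)|$: it picks a vertex $v$ outside an optimal induced $\{K_2,C_5\}$--subgraph $H$, verifies that the hypotheses pass to $G\setminus v$, and then analyzes by hand, using paw--freeness directly, how $v$ can attach to the components of $G\setminus v$ (which by induction are complete graphs and $5$-cycles), reaching either the desired conclusion or a contradiction via an oversized independent set. You avoid induction altogether: after reducing to the connected case by additivity of $\alpha$ and of $\ind-match_{\{K_2, C_5\}}$ over components, you invoke Olariu's structure theorem (a connected paw--free graph is complete multipartite or triangle--free), dispose of the complete multipartite case by observing that such graphs have no induced $2K_2$ and no induced $C_5$, and in the triangle--free case run an exchange argument showing an optimal $H$ must span $V(G)$, whence connectivity forces $G=H\in\{K_2,C_5\}$. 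Your substitution rule is sound: each substitute's only $I_H$--neighbours lie in its own component of $H$ and are exactly the removed vertices, and triangle--freeness kills any edge from $v$ to a substitute (both being neighbours of a common $w\in N$); the delicate subcase where one $C_5$--component contributes both $b_1$ and $b_3$ to $N$ is handled correctly by $\{b_2,b_4\}$. The paper's approach buys self-containedness (no external structure theorem) at the cost of a longer case analysis; yours buys a shorter, non-inductive argument whose triangle--free half isolates a clean statement of independent interest. The one step worth making fully explicit in a write-up is the identity $\mathrm{match}(H)=\alpha(H)$ for disjoint unions of copies of $K_2$ and $C_5$, on which both your componentwise reduction and the claim that $I_H$ is maximum in $G$ rest.
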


\begin{proof}
We use induction on $n:=\mid V(G)\mid$. If $n=2$, then $G\cong K_2$ and there is nothing to prove. Therefore, assume that $G$ is a graph with at least three vertices. Let $H$ be an induced $\{K_2, C_5\}$-subgraph of $G$ with $\ind-match_{\{K_2, C_5\}}(G)={\rm match}(H)$. Since the connected components of $H$ are $K_2$ and $C_5$, we conclude that$$\alpha(H)={\rm match}(H)=\ind-match_{\{K_2, C_5\}}(G)=\alpha(G).$$ If $V(H)=V(G)$, then $G=H$ and the assertion follows. Therefore, suppose that $V(H)\neq V(G)$. Hence, there is a vertex $v\in V(G)\setminus V(H)$. Set $G'=G\setminus v$. As $G'$ is an induced subgraph of $G$ which contains $H$, it follows from the above equalities that $\alpha(G')=\alpha(G)=\alpha(H)$. Also, it is clear that$$\ind-match_{\{K_2, C_5\}}(G')={\rm match}(H).$$Therefore, $\alpha(G')=\ind-match_{\{K_2, C_5\}}(G')$. Assume that $G'$ has an isolated vertex, say $u$. Then $H$ is an induced subgraph of $G'\setminus u$ and therefore, $G'\setminus u$ has an independent set of cardinality $\alpha(H)=\alpha(G)$. This implies that $\alpha(G')\geq \alpha(G)+1$, a contradiction.  Thus $G'$ has no isolated vertex. It then follows from the induction hypothesis that every connected component of $G'$ is either a complete graph or a $5$-cycle graph. Assume that $H_1, \ldots, H_{\ell}$ are the connected components of $G'$. By relabeling the $H_i$'s (if necessary), we may assume that there exists an integer $0\leq s\leq \ell$ such that $H_1, \ldots, H_s$ are isomorphic to $C_5$ and $H_{s+1}, \ldots, H_{\ell}$ are complete graphs. Thus, $\alpha(G)=\alpha(G')=\ell+s$. First assume that there is an integer $i$ with $s+1\leq i\leq \ell$ such that $v$ is adjacent to every vertex of $H_i$. Since $H_i$ has at least two vertices and $G$ is paw--free, $v$ is adjacent to no vertex in $H_1, \ldots, H_{i-1}, H_{i+1}, \ldots, H_{\ell}$. This shows that the connected components are $G$ are $H_1, \ldots, H_{i-1}, H'_i, H_{i+1}, \ldots, H_{\ell}$, where $H'_i$ is the complete graph with vertex set $V(H_i)\cup \{v\}$. Hence, every connected component of $G$ is either a complete graph or a $5$-cycle graph. Thus, assume that for every integer $i$ with $s+1\leq i\leq \ell$, there is a vertex, say $u_i\in V(H_i)$, such that $vu_i\notin E(G)$.

Suppose now that for every integer $1\leq i\leq s$, there are two vertices $w_i, v_i\in V(H_i)$, with $w_iv_i, vw_i, vv_i\notin E(G)$. Then the set$$\{v, v_1, \ldots, v_s, w_1, \ldots, w_s, u_{s+1}, \ldots, u_{\ell}\}$$is an independent subset of vertices of $G$ with cardinality $\ell+s+1$. This contradicts $\alpha(G)=\ell+s$. Thus, there is an integer $1\leq i\leq s$ such that for every non-adjacent vertices $x, y\in V(H_i)$, we have either $vx\in E(G)$ or $vy\in E(G)$. Assume that $V(H_i)=\{z_1, z_2, z_3, z_4, z_5\}$ is the vertex set of $H_i$ and $E(H_i)=\{z_1z_2, z_2z_3, z_3z_4, z_4z_5, z_1z_5\}$ is its edge set. By the above argument at least one of $z_1$ and $z_3$ must be adjacent to $v$. Without lose of generality assume that $vz_1\in E(G)$. Similarly. at least on of $z_2$ and $z_5$ must be adjacent to $v$. Without lose of generality assume that $vz_2\in E(G)$. If $vz_3\notin E(G)$, then the induced subgraph of $G$ on the vertices $z_1, z_2, z_3, v$ is a paw which is a contradiction. Thus, $vz_3\in E(G)$. If $vz_4\notin E(G)$, then the induced subgraph of $G$ on the vertices $z_2, z_3, z_4, v$ is a paw which is again a contradiction. Thus, $vz_4\in E(G)$. But then the induced subgraph of $G$ on the vertices $z_1, z_3, z_4, v$ is a paw. This contradiction completes our proof.
\end{proof}

Let $\Delta_1$ and $\Delta_2$ be two simplicial complexes with disjoint set of variables. Fr${\rm \ddot{o}}$berg \cite{f2} proves that  $\Delta_1\ast\Delta_2$ is Cohen--Macaulay if and only if $\Delta_1$ and $\Delta_2$ are Cohen--Macaulay. Using this result, one can easily prove the following proposition.

\begin{prop} \label{join2cm}
Let $\Delta_1$ and $\Delta_2$ be two simplicial complexes with disjoint set of variables. Then $\Delta_1\ast\Delta_2$ is doubly Cohen--Macaulay if and only if $\Delta_1$ and $\Delta_2$ are doubly Cohen--Macaulay.
\end{prop}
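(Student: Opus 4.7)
The plan is to reduce everything to the Cohen--Macaulay case via Fr\"oberg's theorem (cited just above the proposition) by exploiting the fact that deletion commutes with join in a very clean way.

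First I would establish the structural identity
\[
{\rm del}_{\Delta_1 \ast \Delta_2}(v) = ({\rm del}_{\Delta_1} v) \ast \Delta_2
\qquad \text{for } v \in V(\Delta_1),
\]
with the symmetric statement for $v \in V(\Delta_2)$. This is immediate from the definitions: a face of $\Delta_1 \ast \Delta_2$ has the form $F_1 \cup F_2$ with $F_i \in \Delta_i$, and it avoids $v$ iff $F_1 \in {\rm del}_{\Delta_1} v$. Together with $\dim(\Delta_1 \ast \Delta_2) = \dim \Delta_1 + \dim \Delta_2 + 1$, this identity gives
\[
\dim\bigl({\rm del}_{\Delta_1 \ast \Delta_2}(v)\bigr) = \dim({\rm del}_{\Delta_1} v) + \dim \Delta_2 + 1,
\]
so the ``same dimension'' condition for the join at the vertex $v$ translates exactly into the ``same dimension'' condition for $\Delta_1$ at $v$.

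For the forward direction, suppose $\Delta_1 \ast \Delta_2$ is doubly Cohen--Macaulay. By Fr\"oberg's theorem both $\Delta_i$ are Cohen--Macaulay. Fix $v \in V(\Delta_1)$; then by hypothesis $({\rm del}_{\Delta_1} v) \ast \Delta_2$ is Cohen--Macaulay of the same dimension as $\Delta_1 \ast \Delta_2$. Applying Fr\"oberg's theorem again, ${\rm del}_{\Delta_1} v$ is Cohen--Macaulay, and the dimension computation above forces $\dim({\rm del}_{\Delta_1} v) = \dim \Delta_1$. Hence $\Delta_1$ is doubly Cohen--Macaulay, and symmetrically so is $\Delta_2$.

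For the backward direction, suppose both $\Delta_i$ are doubly Cohen--Macaulay. Fr\"oberg's theorem gives that $\Delta_1 \ast \Delta_2$ is Cohen--Macaulay. For an arbitrary vertex, we may assume $v \in V(\Delta_1)$ by symmetry; then ${\rm del}_{\Delta_1} v$ is Cohen--Macaulay of the same dimension as $\Delta_1$, and $\Delta_2$ is Cohen--Macaulay, so Fr\"oberg's theorem applied to $({\rm del}_{\Delta_1} v) \ast \Delta_2$ gives Cohen--Macaulayness, while the dimension identity gives the required dimension equality. I do not anticipate any real obstacle here: the entire argument is a two-line bookkeeping exercise once the deletion/join identity is written down, and the only thing to be careful about is remembering to verify both the Cohen--Macaulay property \emph{and} the dimension condition in each direction, since doubly Cohen--Macaulay bundles these two requirements together.
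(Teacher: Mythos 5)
Your proof is correct and is exactly the argument the paper has in mind: the paper states the proposition without proof, remarking only that it follows "easily" from Fr\"oberg's theorem, and your deletion/join identity together with the dimension bookkeeping is precisely the routine verification being left to the reader.
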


We are now ready to prove the following theorem.

\begin{thm} \label{2cmpaw}
Let $G$ be a paw--free graph. Then the following statements are equivalent.
\begin{itemize}
\item[(i)] $G$ is doubly Cohen--Macaulay and ${\rm reg}(S/I(G))=\ind-match_{\{K_2, C_5\}}(G)$.
\item[(ii)] Every connected component of $G$ is either a complete graph with at least two vertices or a $5$-cycle graph.
\end{itemize}
\end{thm}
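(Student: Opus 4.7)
The plan is to prove the two implications (ii)$\Rightarrow$(i) and (i)$\Rightarrow$(ii) separately, using Theorem~\ref{2cm}, Lemma~\ref{pawf}, and Proposition~\ref{join2cm} as the principal tools.

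For (ii)$\Rightarrow$(i), I would assume each connected component $G_i$ of $G$ is either $K_{n_i}$ with $n_i\geq 2$ or $C_5$. First I verify that each such building block is doubly Cohen--Macaulay: $\Delta(K_n)$ is the $0$-dimensional complex on $n$ isolated points, and deleting any vertex leaves a $0$-dimensional Cohen--Macaulay complex on $n-1$ points; $\Delta(C_5)$ is the pentagon, a standard $1$-dimensional Cohen--Macaulay complex, and deleting any vertex yields a four-vertex path, which is again $1$-dimensional and Cohen--Macaulay. Iterating Proposition~\ref{join2cm} on $\Delta(G)=\Delta(G_1)\ast\cdots\ast\Delta(G_t)$ shows that $G$ is doubly Cohen--Macaulay. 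For the regularity, Theorem~\ref{2cm} applied componentwise gives ${\rm reg}(S/I(K_{n_i}))=\dim\Delta(K_{n_i})+1=1$ and ${\rm reg}(S/I(C_5))=\dim\Delta(C_5)+1=2$. These match $\ind-match_{\{K_2,C_5\}}(K_{n_i})=1$ (since $K_n$ has no induced $C_5$ and its induced matching number is $1$) and $\ind-match_{\{K_2,C_5\}}(C_5)=2$. Both sides of the desired equality are additive over connected components (the minimal free resolution of $S/I(G)$ is the tensor product of those of its components, and an induced $\{K_2,C_5\}$-subgraph of $G$ decomposes along components), so the equality propagates from the components to $G$.

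For (i)$\Rightarrow$(ii), since $\Delta(G)$ is doubly Cohen--Macaulay, Theorem~\ref{2cm} yields
$$
{\rm reg}(S/I(G))=\dim\mathbb{K}[\Delta(G)]=\alpha(G),
$$
so the hypothesis reduces to $\alpha(G)=\ind-match_{\{K_2,C_5\}}(G)$. To invoke Lemma~\ref{pawf} I must also rule out isolated vertices: if $v$ were isolated in $G$, then $\Delta(G)$ would be a cone with apex $v$, so ${\rm del}_{\Delta(G)}v=\Delta(G\setminus v)$ would have dimension $\dim\Delta(G)-1$, contradicting the doubly Cohen--Macaulay property. Lemma~\ref{pawf} then applies, and in the absence of isolated vertices each connected component of $G$ is forced to be a complete graph on at least two vertices or a $5$-cycle, which is exactly (ii).

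I expect the main obstacle to be the clean handling of the isolated-vertex corner case, since the combinatorial heart of the argument is already packaged in Lemma~\ref{pawf}, and the identification ${\rm reg}(S/I(G))=\alpha(G)$ rests on Theorem~\ref{2cm}. The remaining ingredients---verifying doubly Cohen--Macaulayness for the two building blocks $K_n$ and $C_5$, and additivity of both invariants over connected components---are routine once these two main tools are in place.
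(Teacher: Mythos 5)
Your proposal is correct and follows essentially the same route as the paper: Theorem \ref{2cm} reduces the hypothesis in (i) to $\alpha(G)=\ind-match_{\{K_2,C_5\}}(G)$ so that Lemma \ref{pawf} applies (after ruling out isolated vertices via the doubly Cohen--Macaulay hypothesis), and the converse uses Proposition \ref{join2cm} together with additivity of both invariants over connected components. The only cosmetic difference is that you derive the regularities of $K_n$ and $C_5$ from Theorem \ref{2cm} and spell out why these building blocks are doubly Cohen--Macaulay, where the paper simply cites these as known facts.
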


\begin{proof}
Doubly Cohen--Macaulay graphs have not isolated vertices. Thus, the implication (i)$\Rightarrow$(ii) follows immediately from Theorem \ref{2cm} and Lemma \ref{pawf}.

For (ii)$\Rightarrow$(i), notice that the complete graphs with at least two vertices and $5$-cycle graph are doubly Cohen--Macaulay. Thus, Proposition \ref{join2cm} implies that $G$ is doubly Cohen--Macaulay. The equality ${\rm reg}(S/I(G))=\ind-match_{\{K_2, C_5\}}(G)$ follows from the following facts
\begin{itemize}
\item[(1)] The regularity of the edge ring of a graph is equal to the sum of the regularity of the edge ring of its connected components.
\item[(2)] The regularity of the edge ring of a complete graph is one.
\item[(3)] The regularity of the edge ring of a $5$-cycle graph is two.
\end{itemize}
\end{proof}

\begin{cor}  \label{gorpaw}
Let $G$ be a paw--free graph. Then the following statements are equivalent.
\begin{itemize}
\item[(i)] $G$ is Gorenstein and ${\rm reg}(S/I(G))=\ind-match_{\{K_2, C_5\}}(G)$.
\item[(ii)] The connected components of $G$ are $K_1, K_2$ or $C_5$.
\end{itemize}
\end{cor}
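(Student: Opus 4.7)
The plan is to derive Corollary \ref{gorpaw} from Theorem \ref{2cmpaw} by exploiting the classical implication that a Gorenstein simplicial complex without cone points is doubly Cohen--Macaulay. The direction (ii) $\Rightarrow$ (i) will be verified componentwise: each of $K_1$, $K_2$, and $C_5$ is Gorenstein, and since $\Delta(G_1 \sqcup G_2) = \Delta(G_1) \ast \Delta(G_2)$ and the Gorenstein property is preserved under joins of simplicial complexes, $G$ itself is Gorenstein. The equality ${\rm reg}(S/I(G)) = \ind-match_{\{K_2, C_5\}}(G)$ then follows by additivity of both quantities over connected components, after the trivial checks ${\rm reg}(S/I(H)) = \ind-match_{\{K_2, C_5\}}(H)$ for $H \in \{K_1, K_2, C_5\}$, which give $0, 1, 2$ respectively.

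For the direction (i) $\Rightarrow$ (ii), I would let $G_0$ be the graph obtained from $G$ by deleting all isolated vertices, so that $\Delta(G) = \Delta(G_0) \ast \{u_1\} \ast \cdots \ast \{u_k\}$ where $u_1, \ldots, u_k$ are the isolated vertices of $G$. Both the Gorenstein property and the equality ${\rm reg} = \ind-match_{\{K_2, C_5\}}$ descend from $G$ to $G_0$, because coning by a free vertex merely tensors the Stanley--Reisner ring with a polynomial ring in one variable. Next I would observe that every vertex $v$ of $G_0$ has a neighbor $w$, and any maximal independent set of $G_0$ containing $w$ must avoid $v$; hence $v$ is not in every facet of $\Delta(G_0)$, so $\Delta(G_0)$ is cone-free. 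This makes $\Delta(G_0)$ a Gorenstein* complex, and a classical result of Baclawski asserts that Gorenstein* complexes are doubly Cohen--Macaulay. Since $G_0$ is also paw-free as an induced subgraph of $G$, Theorem \ref{2cmpaw} applies and forces every connected component of $G_0$ to be either a complete graph $K_n$ with $n \geq 2$ or the cycle $C_5$.

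To conclude, I would rule out $K_n$ for $n \geq 3$. Each connected component of $G_0$ inherits the Gorenstein property, since the join decomposition of independence complexes both preserves and reflects Gorensteinness. The independence complex of $K_n$ is the discrete complex on $n$ vertices, whose $h$-vector is $(1, n-1)$; symmetry of the $h$-vector, a necessary condition for Gorensteinness, forces $n = 2$. Hence every connected component of $G_0$ is $K_2$ or $C_5$, and restoring the deleted isolated vertices yields the stated description of $G$. The main technical step is the reduction from Gorenstein to doubly Cohen--Macaulay on $G_0$, which rests on verifying the absence of cone points and then invoking the Gorenstein* $\Rightarrow$ doubly Cohen--Macaulay theorem; the subsequent elimination of $K_n$ for $n \geq 3$ is a short $h$-vector calculation.
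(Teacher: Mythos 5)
Your proposal is correct and follows essentially the same route as the paper: delete the isolated vertices, use the fact that the resulting graph is doubly Cohen--Macaulay (the paper cites \cite[Lemma 1.3]{ht1} where you supply the cone-freeness observation and Baclawski's Gorenstein* $\Rightarrow$ doubly Cohen--Macaulay theorem), apply Theorem \ref{2cmpaw}, and then eliminate $K_n$ for $n\geq 3$ by non-Gorensteinness (which the paper leaves as an easy check and you verify via the $h$-vector $(1,n-1)$). The reverse direction is also handled identically, componentwise via Fr\"oberg's join result.
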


\begin{proof}
(i)$\Rightarrow$(ii) Let $G'$ be the graph obtained from $G$ by deleting the isolated vertices. We know from \cite[Lemma 1.3]{ht1} that $G'$ is a doubly Cohen--Macaulay graph. Thus it follows from Theorem \ref{2cmpaw} that every connected component of $G'$ is either a complete graph or a $5$-cycle graph. On the other hand, by \cite{f2}, a graph is Gorenstein if and only if its connected components are Gorenstein. One can easily check that the complete graphs with at least three vertices are not Gorenstein. Thus, every connected component of $G'$ is either a complete graph $K_2$ or a $5$-cycle graph.

(ii)$\Rightarrow$(i) The equality ${\rm reg}(S/I(G))=\ind-match_{\{K_2, C_5\}}(G)$ follows immediately from Theorem \ref{2cmpaw}, by noticing that the isolated vertices have no effect on regularity. Since $K_1, K_2$ and $C_5$  are Gorenstein graphs, we conclude from \cite{f2} that their disjoint union is Gorenstein too.
\end{proof}

As an immediate consequence of Theorem \ref{girth5} and Corollary \ref{gorpaw}, we obtain the following know result (see \cite[Page 10]{hmt}).

\begin{cor}
Let $G$ be a graph. Assume that the girth of $G$ is at least $5$. Then $G$ is Gorenstein if and only if $G$ is the disjoint union of copies of $K_1, K_2$ and $C_5$.
\end{cor}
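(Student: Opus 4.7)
The plan is to derive this corollary as a direct combination of Theorem \ref{girth5} and Corollary \ref{gorpaw}, using the girth hypothesis as the bridge between them.

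For the forward implication, I would first recall that every Gorenstein ring is Cohen--Macaulay, so the assumption ${\rm girth}(G)\geq 5$ together with Theorem \ref{girth5} immediately yields the equality ${\rm reg}(S/I(G))=\ind-match_{\{K_2, C_5\}}(G)$. Next I would observe that a paw contains a triangle as a subgraph, so girth at least $5$ prevents $G$ from containing a paw even as a (non-induced) subgraph; in particular, $G$ is paw--free. The hypotheses of Corollary \ref{gorpaw}(i) are then all in place, and invoking that corollary forces every connected component of $G$ to be isomorphic to $K_1$, $K_2$, or $C_5$.

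For the reverse implication, I would assume $G$ is a disjoint union of copies of $K_1$, $K_2$, and $C_5$. Each of these three graphs is known to be Gorenstein, so by the Fr${\rm \ddot{o}}$berg characterization \cite{f2} recalled just before Proposition \ref{join2cm}, the disjoint union itself is Gorenstein. Moreover, the only cycles in $G$ come from the $C_5$ components, so either $G$ is a forest or ${\rm girth}(G)=5$; in both cases ${\rm girth}(G)\geq 5$, as required.

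I do not expect any real obstacle here: the whole argument reduces to the two previously established results together with one graph--theoretic observation. The only step that deserves explicit mention is the paw--freeness in the forward direction, and this is automatic from the girth hypothesis since every paw contains a triangle.
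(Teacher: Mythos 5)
Your proposal is correct and follows exactly the route the paper intends: the corollary is stated there as an immediate consequence of Theorem \ref{girth5} and Corollary \ref{gorpaw}, and your argument supplies precisely the two bridging observations needed (Gorenstein implies Cohen--Macaulay so Theorem \ref{girth5} applies, and girth at least $5$ excludes triangles hence paws, so Corollary \ref{gorpaw} applies). The converse via Fr\"oberg's result on joins is likewise the standard justification.
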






\end{document}